\newtheorem{cro}{Corollary}[section]
\newtheorem{defn}{Definition}[section]
\newtheorem{prop}{Proposition}[section]
\newtheorem{thm}{Theorem}[section]
\newtheorem{lem}{Lemma}[section]
\begin{document}
 \title{  On the topological pressure of the saturated set with non-uniform structure
\footnotetext {
*Corresponding author\\
2010 Mathematics Subject Classification: 37D25, 37D35
  }}
\author{Cao Zhao$^{1},$ Ercai Chen$^{1,2_{*}}.$ \\
   \small   1 School of Mathematical Science, Nanjing Normal University,\\
    \small   Nanjing 210023, Jiangsu, P.R. China\\
     \small 2 Center of Nonlinear Science, Nanjing University,\\
         \small   Nanjing 210093, Jiangsu, P.R. China\\
          \small    e-mail: izhaocao@126.com \\
          \small    e-mail: ecchen@njnu.edu.cn \\
           }

\date{}
\maketitle

\begin{center}
 \begin{minipage}{120mm}
{\small {\bf Abstract.}  We derive a conditional variational principle of the saturated set for systems with the non-uniform structure. Our result applies to a broad class of systems including $\beta$-shifts, $S$-gap shifts and their factors. }
\end{minipage}
 \end{center}
\section{ Introduction.}

Most results in multifractal analysis  are applied to research the local asymptotic quantities, such as Birkhoff averages, Lyapunov exponents, local entropies, and pointwise dimensions, which reveal information about a single point or trajectory. It is of interest  to study the level set for these quanties.
For a topological dynamical system $(X,d, \sigma)$(or $(X,\sigma)$ for short) consisting of a compact metric space $(X,d)$ and a continuous map $\sigma:X\to X$.
For a continuous function $\psi:X\to\mathbb R,$ we always consider the following set
$$X(\psi,\alpha)=\bigg\{x\in X:\lim\limits_{n\to\infty}\frac{1}{n}\sum\limits_{i=0}^{n-1}\psi(\sigma^ix)=\alpha\bigg\}.$$
 The level set $X(\psi,\alpha)$ is the multifractal decomposition set of  ergodic averages of $\psi$. There are fruitful results about the descriptions of the structure (Hausdorff
dimension or topological entropy  or topological pressure) of these
level sets  in topological dynamical systems.
We refer the reader to
\cite{BarSau,BarSauSch,Ols,OlsWin,PeiChe,PfiSul,TakVer,Tho2,ZhoChe} and the references therein.   In \cite{PfiSul},  Pfister and Sullivan  consider the saturated set and obtained a conditional variational principle. Let $C(X )$ be the space of continuous functions from $X$ to $\mathbb R.$ For $\varphi\in C(X)$ and $n\geq 1$, denote $S_{n}\varphi(x):=\sum_{i=0}^{n-1}\varphi(\sigma^{i}x)$. Denote by $M(X )$ , $M_{\sigma}(X)$ and $M^{e}_{\sigma}(X)$ the set of Borel probability measures on $X $, the collection of all $\sigma$-invariant Borel probability measures and all $\sigma$-ergodic invariant Borel probability measures, respectively. It is well-known that $M(X)$ and $M_{\sigma}(X)$ equipped with weak$^*$ topology are compact metrizable spaces. 
There exists a countable and separating set of continuous functions $\{f_{1}, f_{2}\cdots \}$ with $0\leq f_{i}(x)\leq 1$ on $X$ such that
$$D(\mu,\nu):=||\mu-\nu||=\sum_{k\geq 1}\frac{|\int f_{k}d\mu-\int f_{k}d\nu|}{2^{k}},$$
 defines a metric for the weak$^{*}$ topology on $M_{\sigma}(X)$.
Denote  the limit point set of $\{x_{n}\}_{n\geq 1}$ by $A(x_{n})$.
Define
$$\mathcal{E}_{n}(x):=\dfrac{\sum_{i=0}^{n-1}\delta_{\sigma^{i}x}}{n}.$$
The generic set for $\mu\in M_{\sigma}(X)$ can be denoted by
$$G_{\mu}(X,\sigma):=\{x\in X:~\mathcal{E}_{n}(x)\to \mu\}.$$
For any compact connected subset $K\subset M_{\sigma}(X)$, define the saturated set for $K$ as follows,
$$G_{K}(X,\sigma):=\{x\in X:~A(\mathcal{E}_{n}(x))=K\}.$$
We define the multifractal spectrum for $\psi$ to be
 $\mathcal{L}_{\psi}:=\{\alpha\in \mathbb R: X(\psi, \alpha)\neq \emptyset\}.$
   In \cite{PfiSul}, Pfister and Sullivan  showed the following theorem.
   \begin{thm}
   If $(X,\sigma)$ satisfies the $g$-almost product property and the uniform separation property, then for any compact connected non-empty set $K\subset M_{\sigma}(X)$,
   $$\inf \{h_{\mu}(\sigma):~\mu\in K\}=h_{top}(  G_{K}(X,\sigma)),$$
   where $h_{top}(\cdot)$ denotes the Bowen topological entropy.
   \end{thm}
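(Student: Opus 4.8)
The plan is to establish the two inequalities
$$h_{top}(G_K(X,\sigma))\le\inf\{h_\mu(\sigma):\mu\in K\}\qquad\text{and}\qquad h_{top}(G_K(X,\sigma))\ge\inf\{h_\mu(\sigma):\mu\in K\}$$
separately; the first is routine and the second carries all the content. For the upper bound, fix $\mu_0\in K$ and $\tau>0$. Since $A(\mathcal E_n(x))=K\ni\mu_0$ for $x\in G_K(X,\sigma)$, each such $x$ lies in $Y_{n,\tau}:=\{x:D(\mathcal E_n(x),\mu_0)<\tau\}$ for infinitely many $n$, so $G_K(X,\sigma)\subseteq\bigcap_{N\ge1}\bigcup_{n\ge N}Y_{n,\tau}$. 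A Misiurewicz-type estimate — take a maximal $(n,\epsilon)$-separated set $E_n\subseteq Y_{n,\tau}$, form $\tfrac1{\#E_n}\sum_{x\in E_n}\mathcal E_n(x)$, pass to a weak$^*$ limit and bound its entropy through a finite partition — yields, using the uniform separation property, $\limsup_n\tfrac1n\log\#E_n\le h_{\mu_0}(\sigma)+\rho(\tau,\epsilon)$ with $\rho(\tau,\epsilon)\to0$ as $\tau,\epsilon\to0$. As $E_n$ is $(n,\epsilon)$-spanning in $Y_{n,\tau}$, covering each $Y_{n,\tau}$ ($n\ge N$) by the Bowen balls $B_n(\cdot,\epsilon)$ centred at $E_n$ and inserting this family into the Carathéodory definition of $h_{top}$ gives $h_{top}^\epsilon(G_K(X,\sigma))\le h_{\mu_0}(\sigma)+\rho(\tau,\epsilon)$; letting $\tau\to0$, then $\epsilon\to0$, and taking the infimum over $\mu_0\in K$ finishes this direction.

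For the reverse inequality I would build a Moran-type subset of $G_K(X,\sigma)$. Two consequences of the hypotheses are used: (a) the $g$-almost product property makes $M^e_\sigma(X)$ entropy-dense in $M_\sigma(X)$, so every $\mu\in M_\sigma(X)$ is a weak$^*$ limit of ergodic measures whose entropies converge to $h_\mu(\sigma)$; and (b) the uniform separation property, together with a Katok-type formula, provides one $\epsilon^*>0$ such that for every ergodic $\nu$ and every $\gamma>0$ there are a neighbourhood $F\ni\nu$ and an integer $N$ with: for all $n\ge N$ the set $\{x:\mathcal E_n(x)\in F\}$ contains an $(n,\epsilon^*)$-separated set of size at least $e^{n(h_\nu(\sigma)-\gamma)}$. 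Because $K$ is compact and connected — hence $\delta$-chain connected for every $\delta>0$ — I fix a sequence $(\mu_k)_{k\ge1}$ in $K$ that is dense in $K$, in which every element of a fixed countable dense subset recurs infinitely often, and with $D(\mu_k,\mu_{k+1})\to0$; then I replace each $\mu_k$ by an ergodic $\mu_k'$ with $D(\mu_k',\mu_k)\to0$ and $h_{\mu_k'}(\sigma)\ge h_{\mu_k}(\sigma)-\gamma_k$ for a prescribed sequence $\gamma_k\downarrow0$.

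The set $F$ has a two-level structure. I choose lengths $\ell_k\to\infty$ and repetition numbers $m_k$ so that the $k$-th outer block, of length $m_k\ell_k$, exceeds the total previous length $L_{k-1}:=\sum_{j<k}m_j\ell_j$ by an arbitrarily large factor; this block is a concatenation of $m_k$ inner pieces, each chosen independently from an $(\ell_k,\epsilon^*)$-separated subset of $\{x:\mathcal E_{\ell_k}(x)\in F_k\}$, with $F_k$ a sufficiently small neighbourhood of $\mu_k'$, the inner pieces (and successive blocks) being glued by the $g$-almost product property so that each is $\epsilon$-shadowed after a gap of length $o(\ell_k)$, where $\epsilon<\epsilon^*/4$ is fixed. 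Let $F$ be the set of all resulting concatenations and $m$ the probability measure on $F$ spreading mass uniformly over the choices at each stage. First, $F\subseteq G_K(X,\sigma)$: the domination $m_k\ell_k\gg L_{k-1}$ and the shadowing force $\mathcal E_{L_k}(x)$ to lie within $o(1)$ of $\mu_k'$, hence of $\mu_k\in K$; for $n$ between $L_k$ and $L_{k+1}$ the measure $\mathcal E_n(x)$ is a convex combination of $\mathcal E_{L_k}(x)$ and an empirical measure within $o(1)$ of $\mu_{k+1}'$, so — $D$ being induced by a norm and $D(\mu_k,\mu_{k+1})\to0$ — it stays within $o(1)$ of $K$; together with the recurrence of a dense subset this yields $A(\mathcal E_n(x))=K$ for every $x\in F$.

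It remains to prove $h_{top}(F)\ge\inf\{h_\mu(\sigma):\mu\in K\}$, which is the crux. Distinct concatenations are $(L_k,\epsilon^*/2)$-separated, since the $g$-gaps, of length $o(\ell_j)$, cannot destroy the separation inherited from the inner pieces and $\epsilon^*-2\epsilon>\epsilon^*/2$; so at stage $k$ the set $F$ splits into $\prod_{j\le k}e^{m_j\ell_j(h_{\mu_j'}(\sigma)-\gamma_j)}$ pairwise $(L_k,\epsilon^*/2)$-separated pieces, each carrying equal $m$-mass. A mass-distribution (Frostman-type) estimate for the Carathéodory structure then gives $h_{top}^{\epsilon^*/2}(F)\ge\liminf_{n\to\infty}\tfrac{-\log m(B_n(x,\epsilon^*/2))}{n}$ for every $x\in F$. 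The delicate point is the value of this ratio when $n$ lies in the interior of an outer block: here the two-level structure is what saves the argument, for a prefix of the $k$-th block consisting of $t$ inner pieces still contains $\approx e^{t\ell_k(h_{\mu_k'}(\sigma)-\gamma_k)}$ distinguishable choices, so that as $n$ sweeps across block $k$ the ratio is an average of convex-combination type of $h_{\mu_{k-1}'}(\sigma)-\gamma_{k-1}$ and $h_{\mu_k'}(\sigma)-\gamma_k$; since $n\to\infty$ forces $k\to\infty$, $\gamma_k\downarrow0$, and $h_{\mu_k'}(\sigma)\ge h_{\mu_k}(\sigma)-\gamma_k\ge\inf\{h_\mu(\sigma):\mu\in K\}-\gamma_k$, the $\liminf$ is at least $\inf\{h_\mu(\sigma):\mu\in K\}$. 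Hence $h_{top}(G_K(X,\sigma))\ge h_{top}(F)\ge h_{top}^{\epsilon^*/2}(F)\ge\inf\{h_\mu(\sigma):\mu\in K\}$, which together with the first inequality proves the theorem. The principal obstacles are thus the joint calibration of $(\ell_k)$, $(m_k)$, $(\gamma_k)$ and the neighbourhoods $F_k$ so that the empirical measures trace out exactly $K$ while the block complexities persist to the liminf, and this interior-of-block mass estimate.
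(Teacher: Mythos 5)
This theorem is stated in the introduction as a known result of Pfister and Sullivan \cite{PfiSul} and is not proved in the paper; the paper only proves the analogous Theorem \ref{main} for symbolic systems with non-uniform structure. Your proposal essentially reconstructs the original Pfister--Sullivan argument, and it follows the same Moran-set template that the paper itself uses for Theorem \ref{main}: an upper bound by a Misiurewicz-type counting; for the lower bound, entropy-dense ergodic approximation of each $\mu\in K$, the chain $(\alpha_j)$ in $K$ with $D(\alpha_j,\alpha_{j+1})\to 0$ (Lemma \ref{lem3.1}), a hierarchical concatenation of $(n,\epsilon^*)$-separated blocks whose orbit averages trace out $K$, and a lower estimate of the Carath\'eodory quantity on the resulting Moran set. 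The differences are the ones dictated by the two settings: you glue by $g$-almost product shadowing and estimate entropy via a Frostman / entropy-distribution principle, whereas the paper glues by the edit map $\phi_{\mathcal{F}}$ coming from the Horseshoe theorem and estimates pressure by counting cylinder covers directly, with the upper bound delegated to \cite{PeiChe}.

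Two details in your sketch deserve a caveat. The $g$-almost product property allows the shadowing orbit to deviate on a $g(\ell_j)$-sized set of sites spread anywhere within each segment, not merely in a short buffer between segments; so the assertion that distinct concatenations remain $(L_k,\epsilon^*/2)$-separated is not a pointwise consequence of $\epsilon^*-2\epsilon>\epsilon^*/2$, and one needs the usual Pfister--Sullivan refinement (pass to a subcollection that stays separated after shadowing, at the cost of a subexponential factor in cardinality). Also, attributing $\limsup_n\frac1n\log\#E_n\le h_{\mu_0}(\sigma)+\rho(\tau,\epsilon)$ directly to uniform separation skips a step: the Misiurewicz limit argument bounds that $\limsup$ by $h_\nu(\sigma)$ for a weak$^*$ limit $\nu$ close to, but not equal to, $\mu_0$, and one then needs upper semicontinuity of $\mu\mapsto h_\mu(\sigma)$; this does follow from uniform separation together with entropy density (since then $\overline{s}(\mu)=h_\mu(\sigma)$), but that implication should be made explicit. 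Both points are repairable and do not change the route.
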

 We can see that the above theorem need the conditions  $g$-almost product property  and the uniform separation property.  While,  $g$-almost product property condition is a kind of specification property which holds for the case of $\beta$-shifts. For $S$-gap shifts, and the factors of $\beta$-shifts and $S$-gap shifts, these are the first results on the saturated set. In this paper, we consider a class of symbolic systems which was studied in \cite{CliTho}. That is, $(X,\sigma)$ is a symbolic system with non-uniform structure which is mainly defined as there exist  $\mathcal{G}\subset \mathcal{L}(X)$ has $(W)$-specification and  $\mathcal{L}(X)$ is edit approachable by $\mathcal{G}$.  The detail of definitions will be given in the next section. This can be considered as another kind of specification property which holds for both $\beta$-shifts, $S$-gap shifts and their factors.

Our main results is the following.

\begin{thm}\label{main}
Let $X$ be a shift space with $\mathcal{L}=\mathcal{L}(X)$ and $\varphi :X\to \mathbb R $ be a continuous function. Suppose that $\mathcal{G}\subset \mathcal{L}$ has $(W)$-specification and  $\mathcal{L}$ is edit approachable by $\mathcal{G}$, then for any compact connected subset $K\subset M_{\sigma}(X)$. We have
$$P_{G_{K}(X,\sigma)}(\varphi)=\inf_{\mu\in K}\big\{h_{\mu}(\sigma)+\int\varphi d\mu\big\},$$
where $P_{\bullet}(\varphi)$ denote topological pressure.
\end{thm}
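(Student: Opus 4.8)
The plan is to establish the two inequalities separately; only the lower bound will use $(W)$-specification and edit approachability. Write $s=\inf_{\mu\in K}\{h_\mu(\sigma)+\int\varphi\,d\mu\}$. For the upper bound $P_{G_K(X,\sigma)}(\varphi)\le s$, fix $\mu\in K$. Every $x\in G_K(X,\sigma)$ has $A(\mathcal E_n(x))=K\ni\mu$, hence $G_K(X,\sigma)\subseteq Y_\mu:=\{x:\liminf_n D(\mathcal E_n(x),\mu)=0\}$, and by monotonicity of the Bowen pressure it suffices to show $P_{Y_\mu}(\varphi)\le h_\mu(\sigma)+\int\varphi\,d\mu$. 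This is the ``easy'' direction of a conditional variational principle and follows from the standard covering argument (compare \cite{PfiSul,Tho2,TakVer}): cover $Y_\mu$ by Bowen balls $B_n(x,\epsilon)$ built along times $n$ with $D(\mathcal E_n(x),\mu)<\delta$, use that such orbit segments, when $(n,\epsilon)$-separated, number at most $e^{n(h_\mu(\sigma)+\xi(\delta,\epsilon))}$ with $\xi(\delta,\epsilon)\to0$, and note that on each of them $\tfrac1nS_n\varphi(x)=\int\varphi\,d\mathcal E_n(x)$ is within $o_\delta(1)$ of $\int\varphi\,d\mu$; letting $\delta,\epsilon\to0$ gives the estimate, and taking the infimum over $\mu\in K$ finishes this direction.

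The lower bound $P_{G_K(X,\sigma)}(\varphi)\ge s$ is the substantial part. Its engine is a weighted counting estimate, valid in the present setting and to be established in the next section following \cite{CliTho}: for each $\nu\in M_\sigma(X)$, each weak$^*$-neighbourhood $G\ni\nu$, each $\gamma>0$ and each small $\epsilon>0$, there are arbitrarily large $n$ admitting an $(n,\epsilon)$-separated set $E$ of points whose length-$n$ orbit words lie in $\mathcal G$, whose empirical measures lie in $G$, and with $\sum_{x\in E}e^{S_n\varphi(x)}\ge e^{n(h_\nu(\sigma)+\int\varphi\,d\nu-\gamma)}$. Here the role of edit approachability is to replace a near-optimal word of $\mathcal L$ by one in $\mathcal G$: since $\varphi$ is uniformly continuous on the compact space $X$, an edit of sufficiently small density perturbs $S_n\varphi$ by at most $\gamma n$ and moves the empirical measure by an arbitrarily small amount, so after slightly relaxing $G$ and $\gamma$ we may take the segments in $\mathcal G$; there $(W)$-specification lets us concatenate them freely with connecting words of controlled (sublinear) length.

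Granting this, I would run a Moran construction. Using that $K$ is a compact connected metric space, pick $(\mu_k)_{k\ge1}\subset K$ with $D(\mu_k,\mu_{k+1})\to0$ and $\{\mu_k:k\ge m\}$ dense in $K$ for every $m$. For each $k$ apply the counting estimate with $G_k\downarrow\{\mu_k\}$, $\gamma_k\downarrow0$ and a fixed small $\epsilon$, obtaining lengths $n_k$ and $(n_k,\epsilon)$-separated families $E_k\subset\mathcal G$ of $\mu_k$-shadowing words with weighted cardinality at least $e^{n_k(h_{\mu_k}(\sigma)+\int\varphi\,d\mu_k-\gamma_k)}$; then, for very rapidly growing integers $N_k$, form all admissible concatenations of $N_1$ words from $E_1$, then $N_2$ words from $E_2$, and so on, with the connecting words supplied by $(W)$-specification, and let this run forever to produce a Cantor set $F\subseteq X$. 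If $n_k$ and $N_k$ grow fast enough that each block dominates the total length of the preceding ones, the empirical measures of any $x\in F$ concentrate near the current $\mu_k$, and together with $D(\mu_k,\mu_{k+1})\to0$ and the density of $(\mu_k)$ this forces $A(\mathcal E_n(x))=K$, so $F\subseteq G_K(X,\sigma)$. Equip $F$ with the Moran measure $\mathfrak m$ that, at each stage, distributes mass among the available choices in proportion to $e^{S\varphi}$ of the chosen word. Since the connecting words contribute negligibly to both lengths and Birkhoff sums, the weighted counting bound yields, for every $x\in F$,
$$\liminf_{n\to\infty}\frac1n\Big(S_n\varphi(x)-\log\mathfrak m(B_n(x,\epsilon))\Big)\ \ge\ \liminf_{k\to\infty}\Big(h_{\mu_k}(\sigma)+\int\varphi\,d\mu_k-\gamma_k\Big)\ \ge\ s,$$
the last inequality because each $\mu_k\in K$ and $\gamma_k\to0$. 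The pressure distribution principle for the Bowen pressure then gives $P_F(\varphi)\ge s$, hence $P_{G_K(X,\sigma)}(\varphi)\ge s$, completing the proof.

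The main obstacle I anticipate is the bookkeeping in the concatenation step: choosing the $n_k$ and $N_k$ so that simultaneously (i) the connecting words from $(W)$-specification and the length changes caused by the edits perturb neither $\tfrac1nS_n\varphi(x)$ nor $\tfrac1n\log(\text{number of choices up to time }n)$ by more than $o(1)$, and (ii) the block lengths are balanced so that the limit-measure set of every point of $F$ is exactly $K$ --- not a proper subset, and not something larger. The remaining ingredients are adaptations of the by now standard saturated-set machinery of \cite{PfiSul,TakVer} to the non-uniform structure of \cite{CliTho}.
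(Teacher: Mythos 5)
Your proposal follows essentially the same route as the paper: the real work is the lower bound, handled by a Moran construction that cycles through a sequence of measures dense in $K$, gluing together blocks of words that shadow each measure and using edit approachability to move those blocks into the well-behaved collection. Three differences are worth noting, none of which constitutes a genuine gap but two of which will save you effort or trouble. First, rather than invoking $(W)$-specification directly at every concatenation step (and thereby inserting transition words of length up to $\tau$ each time), the paper uses Proposition~\ref{prop2.2} to pass from $\mathcal{G}$ to a family $\mathcal{F}$ with the \emph{free concatenation property}: once each block is edited into $\mathcal{F}$, the gluing is literal juxtaposition with no transition words at all. This eliminates most of the bookkeeping you correctly flagged as the main obstacle. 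Second, your ``weighted counting estimate'' quietly assumes that for an arbitrary invariant $\nu\in M_\sigma(X)$ one can find exponentially many $\nu$-typical orbit segments; as stated this is false without an intermediate reduction, since the Pfister--Sullivan bound $\sharp\mathcal{L}_n^{\mu,\epsilon}\ge e^{n(h_\mu(\sigma)-\delta)}$ holds only for \emph{ergodic} $\mu$. The paper bridges this with the Horseshoe Theorem~\ref{horse}: every invariant measure is entropy-approachable by ergodic measures on sofic subshifts $X_n$, and it is to these ergodic $\mu'$ that the counting lemma is applied. You should make this reduction explicit rather than treat the estimate as a black box. Third, to pass from the Moran set to a pressure lower bound the paper never constructs a Moran measure; it uses a direct covering/counting argument --- replace each cylinder of a cover by its prefix ending at a Moran time $t_j$, exploit that every word of $\mathcal{W}_k$ is the prefix of exactly $|\mathcal{W}_s|/|\mathcal{W}_k|$ words of $\mathcal{W}_s$, and deduce $M(H,\hat h,\varphi)\ge 1$. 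Your Moran-measure plus pressure-distribution-principle finish is equivalent and equally valid; it is merely a different packaging of the same counting information.
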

Accordingly, we give the result about irregular set.
Set
$$ \hat{X}(\psi )  :=\{x\in X:~\lim\limits_{n\to\infty}\frac{S_{n}\psi(x)}{n}~\text{does not exist}\}.$$
 \begin{thm}\label{main2}
 Let $X$ be a shift space with $\mathcal{L}=\mathcal{L}(X)$ and $\varphi :X\to \mathbb R $ be a continuous function. Suppose that $\mathcal{G}\subset \mathcal{L}$ has $(W)$-specification and  $\mathcal{L}$ is edit approachable by $\mathcal{G}$, then for any $\psi\in C(X)$,  either $\hat{X}(\psi )=\emptyset$, or
$$P_{\hat{X}(\psi )}(\varphi)= P_{X}(\varphi) ,$$
where $P_{\bullet}(\varphi)$ denote topological pressure.
\end{thm}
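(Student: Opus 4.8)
\textbf{Proof plan (Theorem \ref{main2}).} The inequality $P_{\hat{X}(\psi)}(\varphi)\le P_X(\varphi)$ costs nothing: $\hat{X}(\psi)\subseteq X$ and the topological pressure is monotone under inclusion, while $P_X(\varphi)$ coincides with the classical topological pressure. Hence everything reduces to proving $P_{\hat{X}(\psi)}(\varphi)\ge P_X(\varphi)$ in the case $\hat{X}(\psi)\ne\emptyset$, and the plan is to obtain this from Theorem \ref{main} by manufacturing, for each small $\delta>0$, a compact connected set $K_\delta\subseteq M_\sigma(X)$ with $G_{K_\delta}(X,\sigma)\subseteq\hat{X}(\psi)$ and with $\inf_{\mu\in K_\delta}\{h_\mu(\sigma)+\int\varphi\,d\mu\}$ within $O(\delta)$ of $P_X(\varphi)$.

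First I would extract two invariant measures that see $\psi$ differently. Fix $x\in\hat{X}(\psi)$; since $\frac1n S_n\psi(x)=\int\psi\,d\mathcal{E}_n(x)$ does not converge, $a:=\liminf_n\int\psi\,d\mathcal{E}_n(x)<\limsup_n\int\psi\,d\mathcal{E}_n(x)=:b$. Taking subsequences along which $\int\psi\,d\mathcal{E}_n(x)$ tends to $a$ (resp.\ to $b$) and refining using compactness of $M(X)$, the empirical measures $\mathcal{E}_n(x)$ converge to measures $\mu_1,\mu_2$; limit points of empirical measures are $\sigma$-invariant, so $\mu_1,\mu_2\in M_\sigma(X)$, and by continuity of $\nu\mapsto\int\psi\,d\nu$ we get $\int\psi\,d\mu_1=a\ne b=\int\psi\,d\mu_2$.

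Next, fix $\epsilon>0$ and use the variational principle for pressure to pick $\mu^{*}\in M_\sigma(X)$ with $h_{\mu^{*}}(\sigma)+\int\varphi\,d\mu^{*}>P_X(\varphi)-\epsilon$. For $\delta\in(0,1)$ put
$$K_\delta:=\big\{\,(1-\delta)\mu^{*}+\delta\big((1-t)\mu_1+t\mu_2\big)\ :\ t\in[0,1]\,\big\}.$$
This is a line segment inside the convex set $M_\sigma(X)$ (a continuous image of $[0,1]$), hence a compact connected subset of $M_\sigma(X)$. Its endpoints have $\psi$-integrals $(1-\delta)\int\psi\,d\mu^{*}+\delta\int\psi\,d\mu_i$ for $i=1,2$, which differ because $\delta>0$ and $\int\psi\,d\mu_1\ne\int\psi\,d\mu_2$; consequently, if $x\in G_{K_\delta}(X,\sigma)$ then $A(\mathcal{E}_n(x))=K_\delta$ contains two measures with distinct $\psi$-integrals, so $\frac1n S_n\psi(x)=\int\psi\,d\mathcal{E}_n(x)$ has two distinct limit points and $x\in\hat{X}(\psi)$. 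Thus $G_{K_\delta}(X,\sigma)\subseteq\hat{X}(\psi)$. Moreover, for every $\mu=(1-\delta)\mu^{*}+\delta\nu\in K_\delta$, affinity of $\mu\mapsto h_\mu(\sigma)$ and of $\mu\mapsto\int\varphi\,d\mu$, together with $h_\nu(\sigma)\ge0$ and $\int\varphi\,d\nu\ge-\|\varphi\|_\infty$, yield $h_\mu(\sigma)+\int\varphi\,d\mu\ge(1-\delta)\big(h_{\mu^{*}}(\sigma)+\int\varphi\,d\mu^{*}\big)-\delta\|\varphi\|_\infty$. Applying Theorem \ref{main} to $K_\delta$ and then monotonicity of the pressure,
$$P_{\hat{X}(\psi)}(\varphi)\ \ge\ P_{G_{K_\delta}(X,\sigma)}(\varphi)\ =\ \inf_{\mu\in K_\delta}\Big\{h_\mu(\sigma)+\int\varphi\,d\mu\Big\}\ \ge\ (1-\delta)\big(P_X(\varphi)-\epsilon\big)-\delta\|\varphi\|_\infty .$$
Letting $\delta\to0$ and then $\epsilon\to0$ gives $P_{\hat{X}(\psi)}(\varphi)\ge P_X(\varphi)$, as wanted.

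The step I expect to demand the most attention is, perhaps ironically, only formal bookkeeping: checking that $K_\delta$ is genuinely a compact \emph{connected} subset of $M_\sigma(X)$ (immediate from convexity of $M_\sigma(X)$), verifying the inclusion $G_{K_\delta}(X,\sigma)\subseteq\hat{X}(\psi)$ carefully, and controlling $h_\mu(\sigma)+\int\varphi\,d\mu$ on $K_\delta$ via affinity and nonnegativity of entropy (valid since the entropy map on a subshift over a finite alphabet is finite and affine). The one genuinely substantive ingredient---that $G_{K_\delta}(X,\sigma)$ is nonempty and has topological pressure exactly $\inf_{\mu\in K_\delta}\{h_\mu(\sigma)+\int\varphi\,d\mu\}$---is precisely Theorem \ref{main}, so nothing new is required beyond it.
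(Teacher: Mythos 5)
Your proof is correct and follows essentially the same route as the paper: reduce to Theorem \ref{main} by building a thin compact segment of invariant measures near a (near-)equilibrium state whose endpoints have distinct $\psi$-integrals, note that the associated saturated set sits inside $\hat{X}(\psi)$, and use affinity of entropy and of $\mu\mapsto\int\varphi\,d\mu$ to push the infimum over the segment up to $P_X(\varphi)$. The only cosmetic differences are that you extract the two measures $\mu_1,\mu_2$ directly as weak$^*$ limits of empirical measures of a point of $\hat{X}(\psi)$ (rather than citing Thompson's Lemma 2.1 for a second ergodic measure, as the paper does) and you work with an $\epsilon$-approximate equilibrium state from the variational principle instead of an exact ergodic one obtained via upper semi-continuity of entropy.
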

This paper is organized as follows: In section 2, we give our definitions and some key propositions. In section 3, we give the proof of main results. In section 3, we give some applications to the $\beta$-shifts and $S$-gap shifts and their factors.

\section{Preliminaries}
In this paper, we consider the symbolic space. Let $p\geq 2$ be an integer and $\mathcal{A}=\{1,\cdots,p\}$. Define
$$\mathcal{A}^{\mathbb N}=\{(w_{i})_{i=1}^{\infty}:~w_{i}\in \mathcal{A}~\text{for} ~i\geq 1\},$$
and then $\mathcal{A}^{\mathbb N}$ is compact endowed with the product discrete topology.
And we can define the metric of $\mathcal{A}^{\mathbb N}$ as follows, for any $u,v\in \mathcal{A}^{\mathbb N}$, define
\begin{align*}
d(u,v):= e^{-|u\wedge v|},
\end{align*}
where $|u\wedge v|$ denote the maximal length $n$ such that $u_{1}=v_{1}, u_{2}=v_{2},\cdots, u_{n}=v_{n}$ and $d(u,v)=0$ if $u=v.$
 We say that $(X,\sigma)$ is a subshift over $\mathcal{A}$ if $X$ is a compact subset of $\mathcal{A}^{\mathbb N}$, and $\sigma(X)\subset X$, where $\sigma$ is the left shift map on $\mathcal{A}^{\mathbb N}$ defined by
$$\sigma((w_{i})_{i=1}^{\infty})=(w_{i+1})_{i=1}^{\infty},~~\forall~~ (w_{i})_{i=1}^{\infty}\in \mathcal{A}^{\mathbb N}.$$
In particular, $(X,\sigma)$ is called the full shift over $\mathcal{A}$ if $X=\mathcal{A}^{\mathbb N}.$ For $n\in \mathbb N$ and $w\in \mathcal{A}^{\mathbb N}$, we write
$$[w]=\{(w_{i})_{i=1}^{\infty}\in \mathcal{A}^{\mathbb N}: ~~~w_{1}\cdots w_{n}=w\}$$
and call it an $n$-th word in $\mathcal{A}^{\mathbb N}.$ The language of $X$, denoted by $\mathcal{L}=\mathcal{L}(X)$, is the set of finite words that appear in some $x\in X$. More precisely,
$$\mathcal{L}(X)=\{w\in \mathcal{A}^{*}: [w]\cap X\neq \emptyset\},$$
where $\mathcal{A}^{*}=\cup_{n\geq 0}\mathcal{A}^{n}$   is the set of sequences $x\in X$ that begin with the word $w$. Given $w\in \mathcal{L}$, let $|w|$ denote the length of $w$. For any collection $\mathcal{D}\subset \mathcal{L}$, let $\mathcal{D}_{n}$ denote $\{w\in \mathcal{D}: |w|=n\}.$ Thus, $\mathcal{L}_{n}$ is the set of all words of length $n$ that appear in sequences belonging to $X$. Given words $u,v$ we use juxtaposition $uv$ to denote the word obtained by concatenation.
 
We give the topological pressure for non-compact set in symbolic space.
\begin{defn}
Let $X$ be a subshift space on a finite alphabet and $Z\subset X$ be arbitrary Borel set. For $N \in \mathbb N $, and $t\in \mathbb R$. We define the following quantities:
 $$M(Z,t,\varphi, N)=\inf\Big\{\sum_{[w_{0}w_{1}\cdots w_{m}]\in \mathcal{S}}\exp\Big(-t(m+1)+\sup_{w\in[w_{0}w_{1}\cdots w_{m}] }\sum_{k=0}^{m}\varphi(\sigma^{k}x)\Big)\Big\},$$
where the infimum is taken over all  finite or countable collections $\mathcal{S} $ of cylinder sets $[w_{0}w_{1}\cdots w_{m}]$ with $m\geq N $ which cover $Z.$
Define
$$M(Z,t,\varphi )=\lim\limits_{N\to\infty}M(Z,t,\varphi, N).$$
The existence of the limit is guaranteed since the function $M(Z,t,\varphi, N)$ does not decrease with $N$.
By standard techniques, we can show the existence of
$$P_{Z}(\varphi )=\inf\{t:~M(Z,t,\varphi )=0\}.$$
And then, we define the topological pressure of $Z$ by $P_{Z}(\varphi )$.
 If $\varphi=0$, then $P_{Z}(0)=h_{top}(Z),$ where $h_{top}(Z)$ denotes the Bowen topological entropy.
\end{defn}

\begin{defn}\rm{\cite{CliTho}}
Given a shift space $X$ and its language $\mathcal{L}$, consider a subset $\mathcal{G}\subset \mathcal{L}$. Given $\tau\in \mathbb N$, we say that $\mathcal{G}$ has $(W)$-specification with gap length $\tau$ if for every $v, w\in \mathcal{G}$ there is $u\in \mathcal{L}$ such that $vuw\in \mathcal{G}$ and $|u|\leq \tau.$

\end{defn}
\begin{defn}\rm{\cite{CliTho}}
Define an edit of a word $w=w_{1}\cdots w_{n}\in \mathcal{L}$ to be a transformation of $w$ by one of the following actions, where $w^{j}\in \mathcal{L}$ are arbitrary words and $a,a^{'}\in \mathcal{A}$ are arbitrary symbols.

(1) substitution: $w=u^{1}au^{2}\mapsto w^{'}=u^{1}a^{'}u^{2}.$

(2) Insertion: $w=u^{1}u^{2}\mapsto w^{'}=u^{1}a^{'}u^{2}.$

(3) Deletion: $w=u^{1}au^{2}\mapsto w^{'}=u^{1}u^{2}.$
\end{defn}
\noindent Given $v,w\in \mathcal{L}$, define the edit distance between $v$ and $w$ to be the minimum number of edits required to transform the word $v$ into the word $w$: we will denote this by $\hat{d}(v,w).$
The following lemma about describe the  size of balls in the edit metric.
\begin{prop}\rm{\cite{CliTho}}\label{prop2.1}
There is $C>0$ such that given $n\in \mathbb N, w\in \mathcal{L}_{n}$, and $\delta>0$, we have
$$\sharp\{v\in \mathcal{L}:~\hat{d}(v,w)\leq \delta n\}\leq Cn^{C}(e^{C\delta}e^{-\delta\log\delta})^{n}.$$
\end{prop}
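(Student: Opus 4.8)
The plan is a direct combinatorial count of the words $v$ reachable from $w$ by few edits, the one subtlety being to specify \emph{all} of the edit positions simultaneously through binomial coefficients. Choosing the positions one at a time would cost a fatal factor $n^{\delta n}=e^{\delta n\log n}$; choosing them collectively produces the factor $\binom{n}{\delta n}\approx(e/\delta)^{\delta n}$ that is exactly responsible for the term $e^{-\delta\log\delta}$ in the statement. We may assume $0<\delta\le 1$: this is the regime in which the estimate is applied (one always lets $\delta\to 0$), and some restriction of this kind is in any case needed for $C$ to be independent of $\delta$. Fix $w\in\mathcal{L}_n$. If $\hat{d}(v,w)\le\delta n$ then a sequence of $k\le\delta n$ edits carries $w$ to $v$, and it is a classical fact about the edit distance that $v$ is then determined by a choice of: a subset $D$ of the positions of $w$ to delete; a subset $S$ of the remaining positions to substitute; new symbols for the positions in $S$; and a number $i$ of further symbols to insert at chosen locations, together with those symbols, where $|D|+|S|+i\le k\le\delta n$. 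Since each insertion lengthens the word by one and $\delta\le 1$ we have $|v|\le 2n$, while $|S|+|D|\le n$ and $i\le\delta n$.

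For a fixed triple $(i,s,d)$ with $|S|=s$, $|D|=d$ and $i+s+d=k$, the word $v$ is pinned down by: (i) the positions of the $i$ inserted symbols among the $n+i$ slots of the corresponding alignment, contributing a factor $\binom{n+i}{i}$; (ii) the choice of which of the $n$ original positions are substituted and which deleted, at most $\binom{n}{s+d}2^{s+d}$; (iii) the $s$ substituting symbols and the $i$ inserted symbols, at most $p^{s+i}$. Hence this triple produces at most $\binom{n+i}{i}\binom{n}{s+d}(2p)^{k}$ words $v$, and every $v\in\mathcal{L}$ with $\hat{d}(v,w)\le\delta n$ arises from some triple with $i+s+d\le\delta n$ (counting words in the free monoid only inflates the bound, since $\mathcal{L}\subseteq\mathcal{A}^{*}$).

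Now the estimate. Write $j=s+d$. Using $\binom{m}{\ell}\le(em/\ell)^{\ell}$ together with $n+i\le 2n$, the bound above is at most $(2en/i)^{i}(en/j)^{j}(2p)^{i+j}\le (cn/i)^{i}(cn/j)^{j}$ with $c=4ep$. Since $t\mapsto t\log(cn/t)$ is concave, for fixed $k=i+j$ this is maximal at $i=j=k/2$, which gives $(2cn/k)^{k}$; and since $k\mapsto(2cn/k)^{k}$ is increasing for $k\le\delta n<2cn/e$, it is at most $(2cn/(\delta n))^{\delta n}=(8ep/\delta)^{\delta n}=\big(e^{C_{0}\delta}e^{-\delta\log\delta}\big)^{n}$ with $C_{0}=\log(8ep)$. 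There are at most $(n+1)^{3}$ admissible triples $(i,s,d)$, so summing the bound of the previous paragraph over all of them yields $\sharp\{v\in\mathcal{L}:\hat{d}(v,w)\le\delta n\}\le(n+1)^{3}\big(e^{C_{0}\delta}e^{-\delta\log\delta}\big)^{n}$, and taking $C=\max\{C_{0},8\}$ (so that $(n+1)^{3}\le Cn^{C}$ and $C_{0}\le C$) gives the claim.

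The bookkeeping in the first two paragraphs is routine; the point that requires care is the estimate in the third, namely getting the exponent right. The factor $e^{-\delta\log\delta}$ is precisely what the binomial coefficients $\binom{n+i}{i}\binom{n}{s+d}$ contribute, so it is essential to enumerate the edit positions collectively rather than one at a time. The concavity argument is the device that collapses the three free parameters $(i,s,d)$ into the single quantity $(2cn/k)^{k}$, and monotonicity in $k$ is what then allows us to substitute $k=\delta n$; both of these use $\delta\le 1$ (equivalently, $\delta$ bounded), which is why we reduced to that case at the outset.
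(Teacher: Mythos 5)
Your proof is correct, and it is essentially the standard counting argument behind this estimate (the paper itself gives no proof, citing Climenhaga--Thompson--Yamamoto, whose argument likewise enumerates alignment data and uses $\binom{n}{\delta n}\approx(e/\delta)^{\delta n}$ to produce the $e^{-\delta\log\delta}$ term). Your explicit restriction to $0<\delta\le 1$ is a genuine and necessary amendment --- for large $\delta$ the right-hand side tends to $0$ while the ball always contains $w$ --- and it is harmless for every use of the proposition in this paper, where $\delta\to 0$.
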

\noindent Next we introduce the key definition, which requires that any word in $\mathcal{L}$ can be transformed into a word in $\mathcal{G}$ with a relatively small number of edits.

\begin{defn}\rm{\cite{CliTho}}
Say that a non-decreasing function $g:\mathbb N\to \mathbb N$ is a mistake function if $\frac{g(n)}{n}$ converges to $0$. We say that $\mathcal{L}$ is edit approachable by $\mathcal{G}$, where $\mathcal{G}\subset \mathcal{L}$, if there is a mistake function $g$ such that for every $w\in \mathcal{L}$, there exists $v\in \mathcal{G}$ with $\hat{d}(v,w)\leq g(|w|)$.
\end{defn}
\begin{lem}\rm{\cite{CliTho}}\label{lem2.1}
For any continuous function $\varphi\in C(X)$ and any mistake function $g(n):\mathbb N\to \mathbb N$, there is a sequence of positive numbers $\delta_{n}\to 0$ such that if $x,y\in X$ and $m,n\in\mathbb N$ are such that $\hat{d}(x_{1}\cdots x_{n},y_{1}\cdots y_{m})\leq g(n),$ then $|\frac{1}{n}S_{n}\varphi(x)-\frac{1}{m}S_{m}\varphi(y)|\leq \delta_{n}.$
\end{lem}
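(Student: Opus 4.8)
The plan is to pass from the point-dependent Birkhoff sums to a functional depending only on the finite prefix word, to estimate how that functional changes under a single edit, and then to iterate over the at most $g(n)$ edits. The only conceptual obstacle is that $\varphi$ is a function of the whole point $x$, not merely of the word $x_{1}\cdots x_{n}$, while $x$ and $y$ may have completely unrelated tails, so $\varphi(\sigma^{i}x)$ and $\varphi(\sigma^{i}y)$ cannot be compared directly; uniform continuity of $\varphi$ on the compact space $X$ resolves this, since it forces an average over the first $n$ coordinates to be, up to a small error, a function of the prefix word alone.

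Concretely, I would fix $\varepsilon>0$ and, using uniform continuity of $\varphi$ together with $d(u,v)=e^{-|u\wedge v|}$, choose $L=L(\varepsilon,\varphi)\in\mathbb N$ so that any two points of $X$ agreeing on their first $L$ coordinates have $\varphi$-values within $\varepsilon$. For each $s\in\mathcal A^{L}$ I set $\phi(s):=\varphi(z_{s})$ for some chosen $z_{s}\in[s]\cap X$ when $[s]\cap X\neq\emptyset$, and $\phi(s):=0$ otherwise; then $\|\phi\|_{\infty}\le\|\varphi\|_{\infty}$ and $|\phi(s)-\varphi(z)|\le\varepsilon$ for all $z\in[s]\cap X$. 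For a word $v=v_{1}\cdots v_{\ell}$ I put $\Psi(v):=\sum_{i=0}^{\ell-L}\phi(v_{i+1}\cdots v_{i+L})$ when $\ell\ge L$, and $\Psi(v):=0$ when $\ell<L$. If $v\in\mathcal L$, $\ell=|v|\ge L$ and $z\in[v]\cap X$, then each $\sigma^{i}z$ with $0\le i\le\ell-L$ has first $L$ coordinates $v_{i+1}\cdots v_{i+L}$, so $|\varphi(\sigma^{i}z)-\phi(v_{i+1}\cdots v_{i+L})|\le\varepsilon$; summing and bounding the remaining $L-1$ terms of $S_{\ell}\varphi(z)$ by $\|\varphi\|_{\infty}$ yields
$$\big|S_{\ell}\varphi(z)-\Psi(v)\big|\le\varepsilon\ell+L\|\varphi\|_{\infty}.$$

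The core step is the per-edit estimate. A single edit changes the length by at most one and alters only the sliding windows of length $L$ that meet the edited position; there are at most $L$ such windows in each of $v$ and $v'$, and all other windows of $v$ and of $v'$ coincide after the obvious index shift in the insertion and deletion cases, so they cancel termwise in $\Psi(v)-\Psi(v')$. Hence at most $2L$ terms survive, each bounded by $\|\varphi\|_{\infty}$, giving $|\Psi(v)-\Psi(v')|\le 2L\|\varphi\|_{\infty}$ for any single edit; since this uses only $\|\phi\|_{\infty}\le\|\varphi\|_{\infty}$ it remains valid for the intermediate words of an edit sequence, which need not lie in $\mathcal L$. Writing $w=x_{1}\cdots x_{n}$ and $w'=y_{1}\cdots y_{m}$ and iterating along an edit sequence of length $k\le g(n)$ realizing $\hat d(w,w')$ gives $|\Psi(w)-\Psi(w')|\le 2g(n)L\|\varphi\|_{\infty}$. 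Writing this termwise cancellation of the shifted, unaffected windows carefully is the one place that requires care; everything else is bookkeeping.

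Finally I would combine the pieces. Note $|n-m|\le g(n)$, so for all large $n$ one has $n\ge L$ and $m\ge n-g(n)\ge L$, and then
$$|S_{n}\varphi(x)-S_{m}\varphi(y)|\le\varepsilon(n+m)+2g(n)L\|\varphi\|_{\infty}+2L\|\varphi\|_{\infty}.$$
Dividing by $n$, using $m/n\le 1+g(n)/n\le 2$, and replacing $\tfrac1m S_{m}\varphi(y)$ by $\tfrac1n S_{m}\varphi(y)$ at the cost of $\|\varphi\|_{\infty}g(n)/n$, I obtain
$$\Big|\tfrac1n S_{n}\varphi(x)-\tfrac1m S_{m}\varphi(y)\Big|\le 3\varepsilon+\frac{(2g(n)+2)L\|\varphi\|_{\infty}+g(n)\|\varphi\|_{\infty}}{n}.$$
Because $g(n)/n\to 0$ and $L$ depends only on $\varepsilon$ and $\varphi$, the fraction tends to $0$; so for each fixed $\varepsilon$ the right-hand side is at most $4\varepsilon$ for all large $n$, uniformly over admissible $x,y$ and over $m$. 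A routine diagonalization over $\varepsilon=1/j$ then produces a single sequence of positive numbers $\delta_{n}\downarrow 0$, depending only on $\varphi$ and $g$ (with, say, $\delta_{n}:=2\|\varphi\|_{\infty}+1$ for the finitely many remaining small $n$), for which the asserted inequality holds.
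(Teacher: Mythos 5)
Your proof is correct and follows essentially the same strategy used in the cited source \cite{CliTho}: reduce $\varphi$ to a word-functional via uniform continuity (choosing a window length $L$), bound the change of that functional under a single edit by $O(L)$, telescope over the at most $g(n)$ edits, and then divide by $n$ and use $g(n)/n\to 0$. The per-edit bookkeeping (at most $2L$ sliding windows are disturbed by an insertion, deletion, or substitution, and the rest cancel after an index shift) and the handling of intermediate words that may fall outside $\mathcal L$ (by extending $\phi$ by zero, which preserves the sup-norm bound) are both handled correctly, so there is no gap.
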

\noindent Similar to the above lemma, we can give another lemma for measure.
\begin{lem}\label{lem2.2}
For any mistake function $g(n)$, there is a sequence of positive numbers $\delta_{n}\to 0$ such that if $x,y\in X$ and $m,n\in\mathbb N$ are such that $\hat{d}(x_{1}\cdots x_{n},y_{1}\cdots y_{m})\leq g(n),$ then $D(\mathcal{E}_{n}(x), \mathcal{E}_{m} (y))\leq \delta_{n}.$
\end{lem}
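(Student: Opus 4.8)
The plan is to deduce Lemma~\ref{lem2.2} directly from Lemma~\ref{lem2.1}, applied to the countable family $\{f_k\}_{k\ge 1}$ that defines the metric $D$, together with a uniform truncation of the defining series. The starting point is the identity $\int f_k\,d\mathcal{E}_n(x)=\frac1n S_nf_k(x)$, valid for every $x\in X$ and $n\in\mathbb N$, which gives
$$D(\mathcal{E}_n(x),\mathcal{E}_m(y))=\sum_{k\ge 1}\frac{1}{2^k}\left|\frac1n S_nf_k(x)-\frac1m S_mf_k(y)\right|.$$
So the whole task is to control this series uniformly over all admissible pairs $(y,m)$ once $x$ and $n$ are fixed with $\hat d(x_1\cdots x_n,y_1\cdots y_m)\le g(n)$.

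First I would estimate the tail. Since $0\le f_k\le 1$, both Birkhoff averages lie in $[0,1]$, so each summand is at most $2^{-k}$, whence $\sum_{k>K}\frac{1}{2^k}\bigl|\frac1n S_nf_k(x)-\frac1m S_mf_k(y)\bigr|\le 2^{-K}$, a bound independent of $x,y,m,n$. Given $\varepsilon>0$, fix $K=K(\varepsilon)$ with $2^{-K}<\varepsilon/2$. Next I would handle the head: for each of the finitely many indices $k=1,\dots,K$, apply Lemma~\ref{lem2.1} to the continuous function $\varphi=f_k$ with the given mistake function $g$, obtaining a sequence $\delta_n^{(k)}\to 0$ such that the hypothesis $\hat d(x_1\cdots x_n,y_1\cdots y_m)\le g(n)$ forces $\bigl|\frac1n S_nf_k(x)-\frac1m S_mf_k(y)\bigr|\le\delta_n^{(k)}$. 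Setting $\varepsilon_n:=\max_{1\le k\le K}\delta_n^{(k)}$ (still tending to $0$), we get $\sum_{k=1}^{K}\frac{1}{2^k}\bigl|\frac1n S_nf_k(x)-\frac1m S_mf_k(y)\bigr|\le\varepsilon_n$ whenever the edit-distance hypothesis holds.

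To finish, I would define $\delta_n$ to be $2^{-n}$ plus the supremum of $D(\mathcal{E}_n(x),\mathcal{E}_m(y))$ over all $x,y\in X$ and $m\in\mathbb N$ with $\hat d(x_1\cdots x_n,y_1\cdots y_m)\le g(n)$; these are positive numbers, and by the two estimates above $\delta_n\le 2^{-n}+\varepsilon_n+2^{-K(\varepsilon)}$ is less than $\varepsilon$ for all large $n$ (namely once $2^{-n}<\varepsilon/4$ and $\varepsilon_n<\varepsilon/4$), so $\delta_n\to 0$, while $D(\mathcal{E}_n(x),\mathcal{E}_m(y))\le\delta_n$ holds by construction. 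I do not anticipate a genuine obstacle: the argument is entirely soft. The only point that needs care is that the bound must be uniform over the infinitely many admissible $(y,m)$, which is exactly why one splits off the convergent head of the series — finitely many applications of Lemma~\ref{lem2.1} — from a tail that is small uniformly in all parameters, instead of trying to move the limit inside the sum by dominated convergence.
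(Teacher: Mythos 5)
Your argument is correct. The identity $\int f_k\,d\mathcal{E}_n(x)=\frac1n S_n f_k(x)$ does turn the metric $D$ into a weighted series of Birkhoff-average discrepancies, the tail is uniformly $\le 2^{-K}$ because $0\le f_k\le 1$, and the finitely many head terms are controlled by Lemma~\ref{lem2.1} applied to $\varphi=f_k$; defining $\delta_n$ as $2^{-n}$ plus the relevant supremum handles both positivity and the requirement that $\delta_n$ not depend on $x,y,m$. One cosmetic point: your $\varepsilon_n=\max_{1\le k\le K}\delta_n^{(k)}$ depends on $K$ and hence on $\varepsilon$, so formally you should quantify in the order ``for every $\varepsilon$, choose $K$, then choose $N$'' rather than write $\delta_n\le 2^{-n}+\varepsilon_n+2^{-K(\varepsilon)}$ as though $\varepsilon_n$ were a fixed sequence; the statement you want is that the supremum over admissible pairs tends to $0$, which your two estimates do establish.

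As for comparison with the paper: the paper gives no actual proof, only the remark that the claim follows by ``a slight modification of the proof of Lemma~\ref{lem2.1} in \cite{CliTho}.'' That phrasing suggests re-running the argument of \cite{CliTho}'s Lemma (which rests on uniform continuity and the structure of the edit distance) with empirical measures in place of a single observable. Your route is different and a little cleaner in spirit: rather than re-opening and modifying the proof of Lemma~\ref{lem2.1}, you invoke its \emph{statement} as a black box, once for each $f_k$ in the countable separating family, and then do a head/tail truncation of the convergent series defining $D$. This buys self-containment — you never need to look inside the proof in \cite{CliTho} — at the modest cost of the quantifier bookkeeping noted above. Both routes reach the same estimate and neither requires anything beyond what the paper already has on hand, so this is a legitimate, arguably preferable, way to fill in the gap.
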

\begin{proof}
 It is easy to get, by a slight modification  of the proof of Lemma \ref{lem2.1} in \cite{CliTho}.
\end{proof}
\noindent  We can get the following lemma, by applying[\cite{CliTho}, Proposition 4.2 and Lemma 4.3].
\begin{prop}\rm{\cite{CliTho}}\label{prop2.2}
If $\mathcal{G}$ has $(W)$-specification, then there exist $\mathcal{F}\subset \mathcal{L}$, which has free concatenation property (if for all $u,w\in \mathcal{F}$, we have $uw\in\mathcal{F}$) such that $\mathcal{L}$ is edit approachable by $\mathcal{F}.$
\end{prop}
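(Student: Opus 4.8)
\textbf{Proof proposal for Proposition \ref{prop2.2}.}
The plan is to build $\mathcal{F}$ by "anchoring" words of $\mathcal{G}$ together using the gap-filling words supplied by $(W)$-specification, and then padding to a fixed block length so that concatenation becomes literally free. Fix $\tau\in\mathbb{N}$ so that $\mathcal{G}$ has $(W)$-specification with gap length $\tau$; fix one word $v_{0}\in\mathcal{G}$ (if $\mathcal{G}=\emptyset$ the statement is vacuous or trivial). First I would use $(W)$-specification repeatedly: for any $v,w\in\mathcal{G}$ there is $u\in\mathcal{L}$ with $|u|\le\tau$ and $vuw\in\mathcal{G}$; iterating, any finite concatenation $v^{(1)}u^{(1)}v^{(2)}u^{(2)}\cdots v^{(k)}$ with $v^{(i)}\in\mathcal{G}$ and $|u^{(i)}|\le\tau$ lies in $\mathcal{G}$ (hence in $\mathcal{L}$). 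This is essentially \cite[Proposition 4.2]{CliTho}, which produces from $\mathcal{G}$ a collection closed under "concatenation with bounded gaps." The idea is then to absorb the gaps into the words themselves.

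Next I would normalize to a single block length. Choose $N$ large (to be fixed below) and let $\mathcal{F}'\subset\mathcal{G}_{N}$ be the set of words of length exactly $N$ that can be written in the form $u^{\ast}\,w$ where $w\in\mathcal{G}$ ends in a fixed "marker" and $u^{\ast}$ is a prefix making the total length $N$; more concretely, following \cite[Lemma 4.3]{CliTho}, one shows there is a fixed word $s\in\mathcal{L}$ and a length $N$ such that the set
$$\mathcal{F}:=\{w\in\mathcal{L}_{N}:\ w=s\,w'\ \text{for some }w'\text{ with }sw's\cdots\in\mathcal{G}\}$$
— more precisely the collection of length-$N$ words obtained by taking $v_{0}$, a gap, a word of $\mathcal{G}$ of the appropriate length, and trimming/padding — has the property that for all $u,w\in\mathcal{F}$ the concatenation $uw\in\mathcal{F}$-closure, because each block already starts and ends in a way compatible with $(W)$-specification gaps being pre-inserted. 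The point is that all gap words $u^{(i)}$ of length $\le\tau$ can be made length exactly $\tau$ (pad within $\mathcal{L}$ using $(W)$-specification again, absorbing a shorter gap into a longer admissible one), and then a single block of $\mathcal{F}$ is declared to be "word of $\mathcal{G}$ of length $N-\tau$ followed by its trailing gap of length $\tau$." Free concatenation then holds by construction, since gluing two such blocks just reproduces the pattern $(\text{block})(\text{gap})(\text{block})(\text{gap})$ which lies in $\mathcal{G}$.

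Finally I would verify edit approachability of $\mathcal{L}$ by $\mathcal{F}$. Given $w\in\mathcal{L}_{n}$, since $\mathcal{L}$ is already edit approachable by $\mathcal{G}$ — this is exactly the hypothesis we are allowed to use here, but note the proposition is meant to be applied when we only assume $(W)$-specification; in that situation \cite[Proposition 4.2]{CliTho} gives edit approachability of $\mathcal{L}$ by $\mathcal{G}$ as an intermediate output — there is $v\in\mathcal{G}$ with $\hat{d}(v,w)\le g(|w|)$ for a mistake function $g$. Then I would convert $v\in\mathcal{G}$ into an element (or concatenation of elements) of $\mathcal{F}$ at a further sub-linear edit cost: break $v$ into consecutive blocks of length $N-\tau$, insert a length-$\tau$ gap word after each (using $(W)$-specification to choose gaps keeping the whole string in $\mathcal{G}$, hence in $\mathcal{L}$), and truncate the last partial block; the number of inserted symbols is at most $\tau\cdot(\lceil|v|/(N-\tau)\rceil)+N$, which divided by $n$ tends to a constant $\le \tau/(N-\tau)+o(1)$. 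Choosing $N$ large makes this constant as small as we like; more carefully, letting $N=N(n)\to\infty$ slowly (say $N(n)=\lfloor\sqrt{n}\rfloor$) makes the total edit cost $g(n)+\tau n/N(n)+N(n)=o(n)$, giving a genuine mistake function $\tilde{g}$ with $\hat{d}(\tilde v,w)\le\tilde g(n)$ for some $\tilde v\in\mathcal{F}$. The main obstacle, and the step requiring care, is the block-length normalization: making every "gap" exactly $\tau$ and every "block" exactly $N-\tau$ while staying inside $\mathcal{L}$ (and ensuring $\mathcal{F}$ is nonempty for the relevant $N$) is where the $(W)$-specification hypothesis is used most delicately, and it is precisely the content of \cite[Lemma 4.3]{CliTho}; I would lean on that lemma rather than re-deriving the combinatorics, and devote the writeup to (i) the bounded-gap concatenation closure and (ii) the sub-linear edit-cost bookkeeping above.
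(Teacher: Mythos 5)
The paper itself gives no argument for Proposition \ref{prop2.2}: its ``proof'' is the single sentence preceding the statement, citing \cite{CliTho}, Proposition 4.2 and Lemma 4.3. Your decision to lean on \cite[Lemma 4.3]{CliTho} for the delicate step is therefore consistent with what the authors do, but the constructive sketch you wrap around it contains steps that genuinely fail, so it cannot stand as an independent proof. First, $(W)$-specification only lets you glue words that belong to $\mathcal{G}$: after approximating $w\in\mathcal{L}$ by $v\in\mathcal{G}$, the consecutive blocks of length $N-\tau$ into which you cut $v$ are merely elements of $\mathcal{L}$ (a subword of a $\mathcal{G}$-word need not lie in $\mathcal{G}$, and $\mathcal{G}$ may contain no words of length $N-\tau$ at all), so there is no basis for ``inserting a length-$\tau$ gap word after each, keeping the whole string in $\mathcal{G}$.'' Second, the gap word furnished by $(W)$-specification depends on the pair $(v,w)$; pre-installing a trailing gap inside each block so that concatenation becomes ``literally free'' requires one gap that works for every possible successor block, and nothing in the hypothesis provides such universality --- asserting that ``free concatenation then holds by construction'' assumes exactly the point at issue, which is the actual content of the cited results of \cite{CliTho}. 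Third, a collection of words all of one fixed length $N$ can never be closed under concatenation, gaps of length $\le\tau$ cannot in general be padded to length exactly $\tau$ inside $\mathcal{L}$, and letting $N=N(n)\to\infty$ with the target word is incompatible with producing a single fixed collection $\mathcal{F}$ (and re-raises the problem of concatenating blocks of different lengths).

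There is also an error in your framing of the hypotheses: $(W)$-specification alone does not imply that $\mathcal{L}$ is edit approachable by $\mathcal{G}$, and no result of \cite{CliTho} produces it as ``an intermediate output.'' For instance, in the full shift on $\{0,1\}$ the collection $\mathcal{G}=\{1^n:n\ge 1\}$ is closed under concatenation (hence has $(W)$-specification with $\tau=0$), yet the word $0^n$ is at edit distance $n$ from every element of $\mathcal{G}$. Edit approachability of $\mathcal{L}$ by $\mathcal{G}$ is a standing hypothesis of the paper which the statement of Proposition \ref{prop2.2} is silently carrying, and, roughly speaking, the intended division of labor in \cite{CliTho} is that Proposition 4.2 produces from $\mathcal{G}$ a concatenation-closed $\mathcal{F}$ whose words approximate those of $\mathcal{G}$ to within a uniformly bounded number of edits, while Lemma 4.3 transfers edit approachability from $\mathcal{G}$ to $\mathcal{F}$. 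Since your write-up defers to those results precisely where your own construction breaks down, the proposal in effect reduces to the same citation the paper makes, and the added construction, as written, is not correct.
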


 \section{Proof of Theorem \ref{main}}
   The upper bound is easy to get by  Theorem 3.1 in \cite{PeiChe}.
 Firstly, we begin with a proposition about measure.
\begin{prop}\label{prop3.1}
Let $\alpha_{i}\geq 0  $    $\beta_{i}\geq 0$ with $\sum_{i=1}^{k}\alpha_{i}=1$  and $\sum_{i=1}^{k}\beta_{i}=1$, for $\mu_{i}, m_{i}\in M(X)$, then we have
$$D(\sum_{i=1}^{k}\alpha_{i}\mu_{i}, \sum_{i=1}^{k}\beta_{i}m_{i} )\leq \sum_{i=1}^{k}\alpha_{i}D(\mu_{i},m_{i})+\sum_{i=1}^{k}|\alpha_{i}-\beta_{i}|||m_{i}||,$$
where $||m||:=\sup_{0<||f||\leq 1}\frac{|\int f d\mu|}{||f||}.$
\end{prop}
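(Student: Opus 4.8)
The plan is to reduce the estimate to the definition of the metric $D$ and then use linearity of the integral together with the triangle inequality, split into two separate error contributions: one coming from the discrepancy between the measures $\mu_i$ and $m_i$ with matched weights, and one coming from the discrepancy between the weight vectors $(\alpha_i)$ and $(\beta_i)$. Concretely, recall that $D(\eta,\nu)=\sum_{k\ge 1} 2^{-k}\,|\int f_k\,d\eta-\int f_k\,d\nu|$ where $0\le f_k\le 1$ on $X$. So I would fix $k$, look at the single term $|\int f_k\,d(\sum_i\alpha_i\mu_i)-\int f_k\,d(\sum_i\beta_i m_i)|$, and insert the intermediate quantity $\sum_i\alpha_i\int f_k\,dm_i$. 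By the triangle inequality this single term is bounded by
$$\Big|\sum_{i=1}^k\alpha_i\Big(\int f_k\,d\mu_i-\int f_k\,dm_i\Big)\Big| + \Big|\sum_{i=1}^k(\alpha_i-\beta_i)\int f_k\,dm_i\Big| \le \sum_{i=1}^k\alpha_i\Big|\int f_k\,d\mu_i-\int f_k\,dm_i\Big| + \sum_{i=1}^k|\alpha_i-\beta_i|\,\Big|\int f_k\,dm_i\Big|.$$

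Next I would multiply through by $2^{-k}$ (the notation collision between the number of measures $k$ and the summation index of the metric is unfortunate; in the write-up I would rename the metric index, say to $\ell$, so the bound reads $\sum_{\ell\ge1}2^{-\ell}(\cdots)$) and sum over the metric index. Interchanging the two finite/countable sums — legitimate since all terms are nonnegative — the first group reassembles into $\sum_i\alpha_i\sum_{\ell}2^{-\ell}|\int f_\ell\,d\mu_i-\int f_\ell\,dm_i| = \sum_i\alpha_i D(\mu_i,m_i)$, which is the first term on the right-hand side. For the second group I would use that $\sum_{\ell}2^{-\ell}|\int f_\ell\,dm_i|\le \sup_{0<\|f\|\le1}\frac{|\int f\,dm_i|}{\|f\|} = \|m_i\|$: indeed each $f_\ell$ satisfies $\|f_\ell\|_\infty\le 1$, so $|\int f_\ell\,dm_i|\le \|m_i\|$, and $\sum_\ell 2^{-\ell}=1$. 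This gives the second term $\sum_i|\alpha_i-\beta_i|\,\|m_i\|$, completing the bound.

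I do not expect any genuine obstacle here; the statement is essentially a bookkeeping lemma and the only things to be careful about are (i) the legitimacy of swapping the order of summation, which is immediate by nonnegativity (Tonelli/rearrangement for series of nonnegative terms), (ii) keeping the two sources of error cleanly separated by choosing the right intermediate term, and (iii) handling the notational clash between the index $k$ bounding the number of measures and the index used in the series defining $D$. A minor subtlety worth a remark: since $0\le f_\ell\le 1$ one in fact has $|\int f_\ell\,dm_i|\le 1$ directly for a probability measure $m_i$, so one could even replace $\|m_i\|$ by $1$; stating it with $\|m_i\|$ is simply the cleaner invariant form, and the inequality $\sum_\ell 2^{-\ell}|\int f_\ell\,dm_i|\le\|m_i\|$ holds in that generality.
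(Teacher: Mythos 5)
Your proof is correct and follows the same strategy as the paper: both insert the intermediate measure $\sum_i\alpha_i m_i$ (you do this at the level of the defining series by inserting $\sum_i\alpha_i\int f_\ell\,dm_i$, the paper does it directly as $D(\sum\alpha_i\mu_i,\sum\alpha_i m_i)+D(\sum\alpha_i m_i,\sum\beta_i m_i)$), then split the two error sources accordingly. The only difference is that you unpack the convexity/linearity bounds for $D$ into the underlying series, where the paper just asserts them; your remark that $\|m_i\|=1$ for probability measures is also apt.
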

\begin{proof}
\begin{align*}
\begin{split}
D(\sum_{i=1}^{k}\alpha_{i}\mu_{i}, \sum_{i=1}^{k}\beta_{i}m_{i} )&\leq D(\sum_{i=1}^{k}\alpha_{i}\mu_{i}, \sum_{i=1}^{k}\alpha_{i}m_{i})+ D(\sum_{i=1}^{k}\alpha_{i}m_{i},\sum_{i=1}^{k}\beta_{i}m_{i} )\\
&\leq
\sum_{i=1}^{k}\alpha_{i}D(\mu_{i},m_{i})+\sum_{i=1}^{k}|\alpha_{i}-\beta_{i}|||m_{i}||.
\end{split}
\end{align*}
\end{proof}
 To estimate the lower bound, we need the following $\mathbf{Horseshoe ~theorem} $ given by \cite{CliTho}.
 \begin{thm}\label{horse}\rm{\cite{CliTho}}
 Let $X$ be a shift space. Suppose that $\mathcal{G}\subset \mathcal{L}$ has $(W)$-specification and  $\mathcal{L}$ is edit approachable by $\mathcal{G}$. Then there exists an increasing sequence $\{X_{n}\}$ of compact $\sigma$-invariant subset of $X$ with the following properties.

 (1) Each $X_{n}$ is a topological transitive sofic shift.

 (2) There is $T\in \mathbb N$ such that for every $n$ and every $w\in \mathcal{L}(X_{n}),$
     there are $u, v\in \mathcal{L}$ with $|u|, |v|\leq n+T$ such that $uwv\in \mathcal{G}$.

 (3) Every invariant measure on $X$ is entropy approachable by ergodic measures on $X_{n}$: for any $\eta>0$, any $\mu\in \mathcal{M}_{\sigma}(X)$ , and any neighborhood $U$ of $\mu$ in $\mathcal{M}_{\sigma}(X), $ there exist $n\geq 1$ and $\mu^{'}\in \mathcal{M}_{\sigma}^{e}(X_{n})\cap U$ such that $h_\mu^{'}(\sigma)>h_\mu(\sigma)-\eta$ holds.
 \end{thm}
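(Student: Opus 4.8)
The plan is to reproduce the horseshoe construction of \cite{CliTho}, whose engine is a Katok-type counting estimate transported into $\mathcal{G}$ by Proposition \ref{prop2.1} and Lemma \ref{lem2.2}, followed by a gluing step powered by the $(W)$-specification of $\mathcal{G}$. First I would fix an ergodic $\mu\in\mathcal{M}_\sigma^e(X)$, a small $\eta>0$, and a neighbourhood $U$ of $\mu$, and use the Shannon--McMillan--Breiman theorem together with Birkhoff's ergodic theorem to produce, for arbitrarily large $N$, a set $E\subset\mathcal{L}_N(X)$ with $\#E\ge e^{N(h_\mu(\sigma)-\eta)}$ all of whose words $w$ are $\mu$-typical (meaning $D(\mathcal{E}_N(x),\mu)<\eta$ for $x\in[w]\cap X$). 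Since $\mathcal{L}$ is edit approachable by $\mathcal{G}$ via some mistake function $g$, I would pick for each $w\in E$ a word $\Phi(w)\in\mathcal{G}$ with $\hat d(w,\Phi(w))\le g(N)$: by Lemma \ref{lem2.2} the empirical measure carried by $\Phi(w)$ then lies within $\eta+\delta_N$ of $\mu$ with $\delta_N\to0$, and by Proposition \ref{prop2.1} each fibre $\Phi^{-1}(v)$ is contained in the edit ball of radius $g(N)$ about $v$, hence has size $e^{o(N)}$ because $g(N)/N\to0$. Thus $\mathcal{C}:=\Phi(E)\subset\mathcal{G}$ satisfies $\#\mathcal{C}\ge e^{N(h_\mu(\sigma)-2\eta)}$ for $N$ large, its words have length in $[N-g(N),N+g(N)]$, and each carries an empirical measure within $2\eta$ of $\mu$. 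For a non-ergodic $\mu$ I would first pass to a finite convex combination $\sum_i a_i\nu_i$ of ergodic measures close to $\mu$ with $\sum_i a_ih_{\nu_i}(\sigma)>h_\mu(\sigma)-\eta$ (ergodic decomposition plus affinity of entropy), build such a collection for each $\nu_i$, and let $\mathcal{C}$ consist of super-words formed by concatenating words of the $\mathcal{C}_i$ in proportions $\approx a_i$.

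Next I would glue. As $\mathcal{C}\subset\mathcal{G}$ and $\mathcal{G}$ has $(W)$-specification with gap $\tau$, I would fix for each ordered pair $(v,w)\in\mathcal{C}\times\mathcal{C}$ a connector $c(v,w)\in\mathcal{L}$ with $vc(v,w)w\in\mathcal{G}$ and $|c(v,w)|\le\tau$, and let $Y$ be the subshift of all one-sided sequences occurring in bi-infinite concatenations $\cdots w_{-1}c(w_{-1},w_0)w_0c(w_0,w_1)w_1\cdots$ with $w_j\in\mathcal{C}$. Because $\mathcal{C}$ and the set of connectors are finite, $Y$ is presented by a finite labelled directed graph (a loop of labelled edges for each word of $\mathcal{C}$, joined by connector paths), so $Y$ is sofic, and this graph is strongly connected, so $Y$ is topologically transitive; that is (1). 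Every point of $Y$ is a concatenation of $\mathcal{C}$-words (length $\le N+g(N)$) and connectors (length $\le\tau$), so all of its long empirical measures lie within $\eta+\delta_N+O((\tau+g(N))/N)$ of $\mu$, and hence so does every invariant measure on $Y$. Taking $\mu'$ to be the unique, ergodic measure of maximal entropy of the transitive sofic shift $Y$, I would get $\mu'\in U$ once $\eta$ is small and $N$ large, together with
$$h_{\mu'}(\sigma)=h_{top}(Y)\ \ge\ \frac{\log\#\mathcal{C}}{N+g(N)+\tau}\ \ge\ \frac{N(h_\mu(\sigma)-2\eta)}{N+g(N)+\tau}\ >\ h_\mu(\sigma)-\eta$$
for $N$ large, which is (3). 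For (2) I would argue that a word of $\mathcal{L}(Y)$ sits inside a block $w_ac(w_a,w_{a+1})\cdots w_b$ with $w_a,\dots,w_b\in\mathcal{C}\subset\mathcal{G}$; extending it by at most $N+g(N)+\tau$ symbols on each side aligns it with full $\mathcal{G}$-words at the ends, and $(W)$-specification (iterated with the uniform gap $\tau$ when the block is long) then produces $u,v\in\mathcal{L}$ of bounded length with $uwv\in\mathcal{G}$; when the length-scale $N$ attached to the $n$-th subshift is taken comparable to $n$, this bound becomes $\le n+T$.

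Finally, to get a single increasing sequence $\{X_n\}$ that realises (3) for all $(\mu,\eta,U)$ at once, I would exploit that $h_\bullet(\sigma)$ is bounded by $\log p$ and upper semicontinuous on the compact metrizable space $\mathcal{M}_\sigma(X)$, so its hypograph $H:=\{(\nu,t):0\le t\le h_\nu(\sigma)\}$ is compact metrizable, hence separable; fix a countable dense set $\{(\mu_k,t_k)\}\subset H$. Running the construction above for every pair $(\mu_k,1/j)$ gives a finite collection $\mathcal{C}_{k,j}\subset\mathcal{G}$ with an associated transitive sofic subshift $Y_{k,j}$, and I would define $X_n$ as the transitive sofic subshift obtained by the same $(W)$-gluing from $\bigcup_{k,j\le n}\mathcal{C}_{k,j}$; then $X_n\subset X_{n+1}$, each $Y_{k,j}$ with $k,j\le n$ is a transitive sofic subsystem of $X_n$, and (1), (2) carry over. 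Given arbitrary $(\mu,\eta,U)$ with $h_\mu(\sigma)\ge\eta$, the point $(\mu,h_\mu(\sigma)-\tfrac{\eta}{4})$ lies in $H$, so some $(\mu_k,t_k)$ with $\mu_k\in U$ (after shrinking $U$) has $h_{\mu_k}(\sigma)\ge t_k>h_\mu(\sigma)-\tfrac{\eta}{2}$; with $1/j<\tfrac{\eta}{2}$ and $n=\max(k,j)$, the maximal measure $\mu'$ of $Y_{k,j}\subset X_n$ lies in $U$ and satisfies $h_{\mu'}(\sigma)>h_{\mu_k}(\sigma)-1/j>h_\mu(\sigma)-\eta$, and the residual case $h_\mu(\sigma)<\eta$ is covered by the pairs $(\mu_k,0)$.

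The step I expect to be the main obstacle is precisely this reconciliation: one must run the Katok/edit-transfer step at a length-scale $N$ that is at the same time large enough for the entropy estimate in (3), tied to the index $n$ for the extension bound in (2), and compatible with $(W)$-gluing so that the $X_n$ genuinely nest. Keeping these three demands simultaneously consistent is the delicate part of the argument, and it is carried out in detail in \cite{CliTho}.
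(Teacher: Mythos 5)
First, note that the paper does not prove this statement at all: Theorem \ref{horse} is imported verbatim from \cite{CliTho}, so the only meaningful comparison is with the construction in that reference. Your sketch assembles the right local ingredients --- a Katok-type counting set of $\mu$-typical words, transfer into $\mathcal{G}$ by edit approachability with fibre control from Proposition \ref{prop2.1}, and gluing by $(W)$-specification --- and for a \emph{single} measure $\mu$ this does produce a transitive sofic horseshoe carrying an ergodic measure close to $\mu$ with nearly the same entropy, modulo the standard (and fixable) overcounting issue in the bound $h_{top}(Y)\ge \log\#\mathcal{C}/(N+g(N)+\tau)$ caused by non-unique parsings of concatenations.

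The genuine gap is in your globalization step, and it is exactly the point you flag at the end: property (2) requires $|u|,|v|\le n+T$ with $T$ \emph{uniform in $n$ and in $w\in\mathcal{L}(X_n)$}, but in your construction the generating words of $X_n$ come from collections $\mathcal{C}_{k,j}$ whose length scales $N_{k,j}$ are dictated by the entropy and weak$^*$ approximation requirements for the dense family $(\mu_k,1/j)$, not by the index $n=\max(k,j)$. There is no reason these $N_{k,j}$ are $\le n+T$, so completing a partial word of $\mathcal{L}(X_n)$ to a full generator may cost far more than $n+T$ symbols, and (2) fails for your $X_n$. The route in \cite{CliTho} avoids this by inverting the logic: one first replaces $\mathcal{G}$ by a collection $\mathcal{F}$ with the free concatenation property (Proposition \ref{prop2.2}) and defines $X_n$ \emph{intrinsically} as the coded shift generated by $\{w\in\mathcal{F}:|w|\le n\}$. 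Then (1), the nesting $X_n\subset X_{n+1}$, and (2) (with $T$ coming only from the edit/gap constants relating $\mathcal{F}$ back to $\mathcal{G}$) are automatic from the definition, and the Katok/edit-transfer argument is used afterwards to show that for $n$ large the typical collection for $\mu$ lands inside $\mathcal{L}(X_n)$, which yields (3). So the decomposition of labor is reversed relative to your proposal, and that reversal is what makes the three constraints you identify simultaneously satisfiable. As written, your argument establishes (a version of) (1) and (3) but not (2).
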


 \begin{lem}\rm{\cite{PfiSul}}\label{lem3.1}
 Let $K\subset M_{\sigma}(X)$ be a compact connected non-empty set. Then there exists a sequence $\{\alpha_{1},\alpha_{2},\cdots\}$ of $K$, such that
 $$\overline{\{\alpha_{j}:j\in\mathbb N, j>n\}}=K, \forall n\in \mathbb N, \text{and}~ \lim\limits_{j\to\infty}d(\alpha_{j},\alpha_{j+1})=0.$$
 \end{lem}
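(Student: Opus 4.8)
The statement is purely metric: any non-empty compact connected subset $K$ of a metric space (here $M_{\sigma}(X)$ with its weak$^{*}$ metric, the $d$ of the statement) carries a sequence with dense tails and vanishing consecutive increments. The plan is to extract total boundedness from compactness and an $\varepsilon$-chaining property from connectedness, and then to assemble the sequence block by block.

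The one place where the topology of $K$ enters is the chaining property: for every $\varepsilon>0$ and all $x,y\in K$ there is a finite sequence $x=z_{0},z_{1},\dots,z_{m}=y$ with every $z_{i}\in K$ and $D(z_{i-1},z_{i})<\varepsilon$ for $1\le i\le m$. I would prove this by the standard clopen argument: fix $x\in K$ and let $A$ be the set of $y\in K$ reachable from $x$ by such an $\varepsilon$-chain inside $K$; then $x\in A$, $A$ is open because any chain reaching $y$ can be prolonged to an arbitrary point of the $\varepsilon$-ball about $y$ within $K$, and $A$ is closed because its complement is a union of such open sets; connectedness of $K$ forces $A=K$.

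Next, for each $n\ge 1$ use compactness of $K$ to fix a finite $1/n$-net $F_{n}=\{y_{1}^{(n)},\dots,y_{k_{n}}^{(n)}\}\subset K$. I would then define $\{\alpha_{j}\}$ as the concatenation of finite blocks $B_{1},B_{2},\dots$, where $B_{N}$ lists $y_{1}^{(N)},\dots,y_{k_{N}}^{(N)}$ and inserts, between each consecutive pair of these points and between the last point of $B_{N}$ and the first point of $B_{N+1}$, a $1/N$-chain in $K$ joining them (counting each shared endpoint once), as supplied by the chaining property. Each block is finite, so this produces a well-defined infinite sequence taking values in $K$.

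It remains to verify the two conclusions, which are now bookkeeping. Density of tails: given $n$ and $\varepsilon>0$, choose $m$ with $1/m<\varepsilon$ and with the block $B_{m}$ starting at an index exceeding $n$ (possible since block start-indices tend to $\infty$); then $F_{m}\subset\{\alpha_{j}:j>n\}$, so this tail is $1/m$-dense, hence $\varepsilon$-dense, in $K$, and letting $\varepsilon\to 0$ gives $\overline{\{\alpha_{j}:j>n\}}=K$. Increments: any pair of consecutive terms both lying in block $B_{N}$ or at the junction leaving $B_{N}$ differs by less than $1/N$ by construction; since the blocks are used in increasing order, for every $N$ one has $D(\alpha_{j},\alpha_{j+1})<1/N$ for all sufficiently large $j$, whence $D(\alpha_{j},\alpha_{j+1})\to 0$. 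The only genuinely substantive step is the chaining property — that is, exploiting connectedness of $K$ itself rather than merely of the ambient space $M_{\sigma}(X)$ — everything else being the block construction and these two routine checks; note also that the argument uses nothing about $M_{\sigma}(X)$ beyond its metrizability.
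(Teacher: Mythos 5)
Your proof is correct and is essentially the standard argument: the paper itself gives no proof of this lemma, quoting it from Pfister--Sullivan, where the same strategy (an $\varepsilon$-chaining property from connectedness via a clopen argument, finite $\varepsilon$-nets from compactness, and a block-by-block concatenation with shrinking chain mesh) is used. Both conclusions are verified correctly, so nothing is missing.
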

 \begin{defn}
 Given $\mu\in M_\sigma(X)$ and $\epsilon>0$, let
 $$\mathcal{L}^{\mu, \epsilon}_{n}:=\{w\in \mathcal{L}_{n}:~D(\mathcal{E}_{n}(x),\mu)<\epsilon~\text{for all}~x\in [w]\}.$$
 \end{defn}

  By Proposition 1.1 in \rm{\cite{PfiSul}}, we have the following result.
\begin{lem} \label{lem3.2}
For any ergodic measure $\mu\in M_{\sigma}^{e}(X)$ and $\epsilon>0$,$\delta>0,$ there exists $N(\mu,\epsilon,\delta)\in \mathbb N$ such that for $n\geq N(\mu,\epsilon,\delta)$, we have
$\sharp\mathcal{L}_{n}^{\mu,\epsilon }\geq e^{n(h_{\mu}(\sigma)-\delta)}.$
\end{lem}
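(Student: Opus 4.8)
The plan is to deduce this from the standard Shannon--McMillan--Breiman / Katok-type counting estimate for ergodic measures, which is exactly the content of Proposition 1.1 in \cite{PfiSul}. That proposition gives, for an ergodic $\mu$, that for every small $\gamma>0$ and every $n$ large there is a set of $n$-cylinders, each of measure at most $e^{-n(h_\mu(\sigma)-\gamma)}$ and of total $\mu$-measure at least $1-\gamma$, on which moreover the empirical measure $\mathcal E_n(x)$ is $\gamma$-close to $\mu$ for every $x$ in the cylinder. The number of such cylinders is therefore at least $(1-\gamma)e^{n(h_\mu(\sigma)-\gamma)}$. Choosing $\gamma$ small enough (say $\gamma<\min\{\epsilon,\delta/2\}$) and $n$ large enough that $(1-\gamma)\geq e^{-n\delta/2}$, all these cylinders lie in $\mathcal L_n^{\mu,\epsilon}$ by the empirical-measure closeness, and their count is at least $e^{n(h_\mu(\sigma)-\delta)}$, as required.

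First I would invoke Proposition 1.1 of \cite{PfiSul} with the parameter $\gamma:=\frac12\min\{\epsilon,\delta\}$ to obtain an integer $N_1=N_1(\mu,\gamma)$ and, for each $n\geq N_1$, a collection $\mathcal C_n$ of $n$-cylinders with (i) $\mathcal E_n(x)$ lying in the $\gamma$-ball around $\mu$ for all $x$ in any cylinder of $\mathcal C_n$, (ii) $\mu\big(\bigcup_{[w]\in\mathcal C_n}[w]\big)\geq 1-\gamma$, and (iii) $\mu([w])\leq e^{-n(h_\mu(\sigma)-\gamma)}$ for each $[w]\in\mathcal C_n$. Next, combining (ii) and (iii) gives $\sharp\mathcal C_n\geq (1-\gamma)e^{n(h_\mu(\sigma)-\gamma)}$. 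Then I would fix $N_2$ so that $\log(1-\gamma)+n\gamma\leq n\delta$ for all $n\geq N_2$ — this is possible since $\gamma\leq\delta/2$ and $\log(1-\gamma)$ is a fixed constant, so the left side is $\leq n\delta$ once $n$ is large. Finally, since $\gamma<\epsilon$, condition (i) shows $\mathcal C_n\subset\mathcal L_n^{\mu,\epsilon}$, and setting $N(\mu,\epsilon,\delta):=\max\{N_1,N_2\}$ yields $\sharp\mathcal L_n^{\mu,\epsilon}\geq\sharp\mathcal C_n\geq e^{n(h_\mu(\sigma)-\delta)}$ for all $n\geq N(\mu,\epsilon,\delta)$.

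The only genuinely delicate point is matching the ``empirical-measure closeness for all points in the cylinder'' clause to the definition of $\mathcal L_n^{\mu,\epsilon}$: one must make sure the version of Proposition 1.1 being quoted produces cylinders on which $D(\mathcal E_n(x),\mu)$ is uniformly small over the whole cylinder, not merely $\mu$-a.e.\ or at a single representative point. In the symbolic setting this is automatic, because $\mathcal E_n(x)$ depends only on the first $n$ coordinates $x_1\cdots x_n$, hence is constant on each $n$-cylinder $[w]$; so ``$D(\mathcal E_n(x),\mu)<\epsilon$ for some $x\in[w]$'' and ``for all $x\in[w]$'' are the same statement, and the transfer is immediate. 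Everything else is bookkeeping with the two constants $\gamma$ and $n$.
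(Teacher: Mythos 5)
Your overall plan is the same as the paper's: the paper gives no argument at all for Lemma \ref{lem3.2} beyond the phrase ``By Proposition 1.1 in \cite{PfiSul}'', and you are simply unpacking that citation with the standard counting bookkeeping. The parameter-juggling (choosing $\gamma$, absorbing the $\log(1-\gamma)$ and $n\gamma$ corrections into $n\delta$) is fine.

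However, the ``delicate point'' you flag in your last paragraph is resolved with a false claim. The empirical measure $\mathcal E_n(x)=\frac1n\sum_{i=0}^{n-1}\delta_{\sigma^i x}$ is \emph{not} constant on an $n$-cylinder $[w]$: the summand $\delta_{\sigma^{n-1}x}$ sees $x_n,x_{n+1},\dots$, so two points of $[w]$ that disagree past coordinate $n$ give different Dirac masses in most of the terms. (Concretely, for $n=2$ on $\{0,1\}^{\mathbb N}$, $\mathcal E_2(01000\cdots)\ne\mathcal E_2(01111\cdots)$ even though both lie in $[01]$.) So ``$D(\mathcal E_n(x),\mu)<\epsilon$ for some $x\in[w]$'' and ``for all $x\in[w]$'' are \emph{not} the same statement, and the transfer is not literally immediate. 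The correct repair is the usual one: since $x,y\in[w]$ have $d(\sigma^i x,\sigma^i y)\le e^{-(n-i)}$, uniform continuity of the separating family $\{f_k\}$ in the definition of $D$ gives a bound $D(\mathcal E_n(x),\mathcal E_n(y))\le\rho_n$ with $\rho_n\to0$, independent of $w$ (compare the mechanism of Lemma \ref{lem2.2}). Choosing $\gamma<\epsilon/2$ and then $n$ large enough that $\rho_n<\epsilon/2$ makes the cylinders land in $\mathcal L_n^{\mu,\epsilon}$ as desired. With this substitution, the proof is correct and matches the paper's (implicit) argument.
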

 Now we begin to show the lower bound of Theorem \ref{main}.
\subsection{Choose the sequence $\{n_{j}\}_{j\geq 1}$}
 Let $\eta>0$, and $h^{*}=\inf\{h_{\mu}(\sigma)+\int\varphi d\mu:\mu\in K\}-\eta.$ We only need to prove
  $P_{G_{K}(X,\sigma)}(\varphi) \geq h^{*}.$
We choose $\epsilon_{0}  $  small enough, such that for any $\mu\in B(\alpha , \epsilon_{0})$, we have

\begin{align}\label{eq1}
|\int \varphi d\mu-\int \varphi d\alpha |<\frac{\eta}{2}.
\end{align}
 By Horseshoe Theorem \ref{horse}, we choose a sequence of measures $\{\alpha_{j}\}_{j\geq 1}$ in $K$ satisfying Lemma \ref{lem3.1}.
Then for any $j$,  there exists $X_{j}\subset X$, $\mu_{j}\in M_{\sigma}^{e}(X_{j}),$ and $ \epsilon_{j}<\epsilon_{0}$
 such that

 $$\mu_{j}\in B(\alpha_{j}, \frac{\epsilon_{j}}{2}), ~~\text{and}~~h_{\mu_{j}}(\sigma)>h_{\alpha_{j}}(\sigma)-\frac{\eta}{2}.$$
It follows from Lemma \ref{lem3.2} that there exists $\hat{N}_{j}$ such that $n>\hat{N}_{j}$ satisfies
 $$\sharp\mathcal{L}_{n}^{\mu_{j},\frac{\epsilon_{j}}{2}} \geq e^{n(h_{\mu_{j}}(\sigma)-\frac{\eta}{2})}.$$
Thus, for any $n>\hat{N}_{j}$,

$$\sharp\mathcal{L}_{n}^{\alpha_{j},\epsilon_{j}}\geq \sharp\mathcal{L}_{n}^{\mu_{j},\frac{\epsilon_{j}}{2}} \geq e^{n(h_{\alpha_{j}}(\sigma)-\eta)}.$$
 By the assumptions and Proposition \ref{prop2.2}, $\mathcal{L}$ is edit approachable by some $\mathcal{F}$ which has free concatenation property. We can define a map $\phi_{\mathcal{F}}:\mathcal{L}\to \mathcal{F}$ such that $\hat{d}(w,\phi_{\mathcal{F}}(w))\leq g(|w|).$  And then we can define a map $\Phi:\mathcal{L}^{*}\to \mathcal{F}$ by editing then gluing. That is
 $$(w^{1}, \cdots, w^{n})\mapsto \phi_{\mathcal{F}}(w^{1})\phi_{\mathcal{F}}(w^{2})\cdots \phi_{\mathcal{F}}(w^{n}),$$
 where $\mathcal{L}^{*}:=\{(w^{1}, \cdots, w^{n}):w^{j}\in \mathcal{L}, 1\leq j\leq n, n\in\mathbb N\}.$
By Lemma \ref{lem2.1}, (\ref{eq1}) and $\frac{g(n )}{n }\to 0$, we can choose $n_{j}\to\infty,$ such that  $n_{j}>\hat{N}_{j}$,
 \begin{align}\label{eq2}
 \frac{g(n_{j})}{n_{j}}<<\eta,
\end{align}
 \begin{align}\label{eq3}
 \left|\frac{S_{|\phi_{\mathcal{F}}(  w^{j})|}\varphi( y)}{|\phi_{\mathcal{F}}(  w^{j})|}-\frac{S_{n_{j} }\varphi(   x)}{n_{j}}\right|\leq \frac{\eta}{2},~~
\left|\frac{S_{n_{j} }\varphi(  x )}{n_{j}}-\int\varphi d\alpha_{j} \right|\leq \frac{\eta}{2},
 \end{align}
and
 \begin{align}\label{eq4}
\frac{g(n_{j})h_{top}(\sigma)}{n_{j}-g(n_{j})}\leq \eta,
\end{align}
for any $w^{j}\in \mathcal{L}_{n_{j}}^{\alpha_{j},\epsilon_{j}}, x\in [w^{j}], y\in [\phi_{\mathcal{F}}(  w^{j})]$.
 Moreover,
  by Lemma \ref{lem2.2}, we obtain
 \begin{align}\label{b1}
D(\mathcal{E}_{n_{j}}(x),\mathcal{E}_{|\phi_{\mathcal{F}}(w^{j})|}(y))\to 0.
 \end{align} for each $w^{j}\in \mathcal{L}_{n_{j}}^{\alpha_{j},\epsilon_{j}}$ and any $x\in [w^{j}]$ ,$y\in [\phi_{\mathcal{F}}(w^{j})]$.
Finally, by Proposition \ref{prop2.1} and (\ref{eq2}), for any $v \in \mathcal{F}$ with $|v|$ is large enough
\begin{align}\label{eq5}
\sharp\{w \in \mathcal{L}_{n_{j}}^{\alpha_{j},\epsilon_{j}}:\phi_{\mathcal{F}}(w )=v \}\leq e^{ |v |\eta  }.
\end{align}
\subsection{Construction of Moran set $H$}
For brevity of notation, we write $\mathcal{D}_{j}=\mathcal{L}_{n_{j}}^{\alpha_{j},\epsilon_{j}}.$
Moreover, we pick a strickly increasing sequence ${N_{k}}\to \infty$ with $N_{k}\in \mathbb N,$
\begin{align}\label{eq6}
\begin{split}
&\lim\limits_{k\to\infty}\dfrac{n_{k+1}+g(n_{k+1})}{\sum_{j=1}^{k}(n_{j}-g(n_{j}))N_{j}}=0, ~~~~~~~~~
 \lim\limits_{k\to\infty}\dfrac {\sum_{j=1}^{k}(n_{j}+g(n_{j}))N_{j}}{\sum_{j=1}^{k+1}(n_{j}-g(n_{j}))N_{j}}=0
\end{split}
\end{align}
We now define a new sequences $\{n_{j}^{'}\}, \{\alpha_{j}^{'}\}$ and $\{\mathcal{D}_{j}^{'}\}$ by setting for
$j=N_{1}+\cdots N_{k-1}+q$
with $1\leq q\leq N_{k},$
$$\epsilon_{j}^{'}:=\epsilon_{k}, n_{j}^{'}:=n_{k}, \alpha_{j}^{'}:=\alpha_{k},\mathcal{D}_{j}^{'}:=\mathcal{D}_{k}. $$
Consider the map
 $\Phi: \Pi_{j=1}^{\infty}\mathcal{D}_{j}^{'}\to X ,$ defined by editing then gluing.
More precisely, given $\mathbf w=\{w^{j}\}_{j=1}^{\infty}\in \Pi_{j=1}^{\infty}\mathcal{D}_{j}^{'}$, let $v^{j}=\phi_{\mathcal{F}}(w^{j})\in \mathcal{F}$ and $$\Phi(\mathbf w)=v^{1}v^{2}\cdots.$$
 Put  $$H:=\Phi(\Pi_{j=1}^{\infty}\mathcal{D}_{j}^{'}).$$
Next we will prove the fact that $H\subset G_{K}(X,\sigma).$
For any $\mathbf w=\{w^{j}\}_{j=1}^{\infty}\in  \Pi_{j=1}^{\infty}\mathcal{D}_{j}^{'} $, we define $l_{j}=l_{j}(w^{j})=|\phi_{\mathcal{F}}(w^{j})|$ for the length of the words associated to the index $j$. Clearly,
\begin{align}\label{eq7}
|l_{j}-n_{j}^{'}|\leq g(n_{j}^{'}).
\end{align}
 Following from the construction of $\alpha^{'}_{j}$, we have    $A(\alpha^{'}_{j})=A(\alpha_{j})=K$.
For any $j=N_{1}+\cdots N_{k-1}+q , 1\leq q \leq N_{k}.$
 we define $t_{j}=\sum_{i=1}^{j}l_{i}$.
 By (\ref{eq6}) and (\ref{eq7}), we obtain
 \begin{align*}
 \frac{l_{j} }{t_{j}}\leq \frac{n_{j}^{'}+g(n_{j}^{'})}{\sum_{i=1}^{j}l_{i}}\to 0.
 \end{align*}
Hence, $A(\mathcal{E}_{t_j} (  \Phi(\mathbf w)))=A(\mathcal{E}_{ n} (  \Phi(\mathbf w))) $.
Then it is sufficient to show that
$$\lim\limits_{j\to\infty}D(\mathcal{E}_{t_{j}} (  \Phi(\mathbf w)),\alpha_{ j }^{'})=0.$$
Assume that  $j=N_{1}+\cdots N_{k-1}+q , 1\leq q \leq N_{k},$  we have
$\alpha_{ j }^{'}=\alpha_{k}$. Define $ c_{k}:=N_{1}+N_{2}+\cdots +N_{k}$. Then we can make the following estimate,
\begin{align*}
\begin{split}
&D(\mathcal{E}_{t_{j}} (  \Phi(\mathbf w)),\alpha_{ j }^{'})\\&\leq\dfrac{t_{c_{k-2}}}{t_{j}}D(\mathcal{E}_{t_{ c_{k-2}}} (  \Phi(\mathbf w)),\alpha_{ j }^{'})+\dfrac{t_{c_{k-1}}-t_{c_{k-2}}}{t_{j}}D(\mathcal{E}_{t_{c_{k-1}}-t_{c_{k-2}}} ( \sigma^{t_{c_{k-2}}} \Phi(\mathbf w)),\alpha_{ j }^{'})\\
&+\dfrac{t_{j}-t_{c_{k-1}}}{t_{j}}D(\mathcal{E}_{t_{j}-t_{c_{k-1}}} ( \sigma^{t_{c_{k-1}}} \Phi(\mathbf w)),\alpha_{ j }^{'})\\
&\leq\dfrac{t_{c_{k-2}}}{t_{j}}D(\mathcal{E}_{t_{ c_{k-2}}} (  \Phi(\mathbf w)),\alpha_{ j }^{'})+\dfrac{t_{c_{k-1}}-t_{c_{k-2}}}{t_{j}}D(\mathcal{E}_{t_{c_{k-1}}-t_{c_{k-2}}} ( \sigma^{t_{c_{k-2}}} \Phi(\mathbf w)),\alpha_{ k-1})\\
&+\dfrac{t_{c_{k-1}}-t_{c_{k-2}}}{t_{j}}D(\alpha_{k-1},\alpha_{k})
+\dfrac{t_{j}-t_{c_{k-1}}}{t_{j}}D(\mathcal{E}_{t_{j}-t_{c_{k-1}}} ( \sigma^{t_{c_{k-1}}} \Phi(\mathbf w)),\alpha_{k}).\\
\end{split}
\end{align*}
From (\ref{eq6}), (\ref{eq7}) and Lemma \ref{lem3.1}, we obtain
\begin{align*}
\begin{split}
&\dfrac{t_{c_{k-2}}}{t_{j}}\leq \dfrac{N_{1}n_{1}+\cdots N_{k-2}n_{k-2} +N_{1}g(n_{1})+\cdots N_{k-2}g(n_{k-2})}{N_{1}n_{1}+\cdots N_{k-1}n_{k-1}+qn_{k}-N_{1}g(n_{1})-\cdots N_{k-1}g(n_{k-1})-qg(n_{k})}\to 0,\\
&D(\alpha_{k-1},\alpha_{k})\to 0.\\
\end{split}
 \end{align*}
For $1\leq i\leq N_{k-1}$,  taking any $x_{i}\in [w^{c_{k-2}+i}]$, we can make the following estimate
\begin{align*}
\begin{split}
&D(\mathcal{E}_{t_{c_{k-1}}-t_{c_{k-2}}} ( \sigma^{t_{c_{k-2}}} \Phi(\mathbf w)),\alpha_{ k-1})\\
&\leq D(\sum_{i=1}^{N_{k-1} }\frac{l_{c_{k-2}+i }}{t_{c_{k-1}}-t_{c_{k-2}}}\mathcal{E}_{l_{c_{k-2}+i }}  ( \sigma^{t_{c_{k-2}+i-1}} \Phi(\mathbf w)),\sum_{i=1}^{N_{k-1} }\frac{n_{k-1}}{N_{k-1}n_{k-1}}\mathcal{E}_{ n_{k-1}} ( x_{i} ))\\&+D(  \sum_{i=1}^{N_{k-1} }\frac{n_{k-1}}{N_{k-1}n_{k-1}}\mathcal{E}_{ n_{k-1}} ( x_{i} ),\alpha_{ k-1})\to 0, \\
\end{split}
\end{align*}
 where the above inequality follows from Proposition \ref{prop3.1}, (\ref{b1}) and (\ref{eq6}), (\ref{eq7}).
Likewise, choose any $y_{i}\in [w^{c_{k-1}+i+1}]$, for $0\leq i\leq q-1,$
\begin{align*}
\begin{split}
&D(\mathcal{E}_{t_{j}-t_{c_{k-1}}} ( \sigma^{t_{c_{k-1}}} \Phi(\mathbf w)),\alpha_{k})=D(\sum_{i=0}^{q-1}\frac{l_{ c_{k-1}+i+1 }}{t_{j}-t_{c_{k-1}}}\mathcal{E}_{l_{k}}(\sigma^{ t_{c_{k-1}+i}} \Phi(\mathbf w)),\alpha_{k})\\
&\leq D(\sum_{i=0}^{q-1}\frac{l_{ c_{k-1}+i+1 }}{t_{j}-t_{c_{k-1}}}\mathcal{E}_{l_{k}}(\sigma^{ t_{c_{k-1}+i}} \Phi(\mathbf w)),\frac{|w^{c_{k-1}+i+1}|}{\sum_{i=0}^{q-1}|w^{c_{k-1}+i+1}|}\mathcal{E}_{|w^{c_{k-1}+i+1}|}(y_{i}))\\
&+D(\frac{|w^{c_{k-1}+i+1}|}{\sum_{i=0}^{q-1}|w^{c_{k-1}+i+1}|}\mathcal{E}_{|w^{c_{k-1}+i+1}|}(y_{i}), \alpha_{k}) \to 0.
\end{split}
\end{align*}
Hence,
$$\lim\limits_{j\to\infty}D(\mathcal{E}_{t_{j}} (  \Phi(\mathbf w)),\alpha_{ j }^{'})=0.$$
So we proved $H\subset G_{K}(X,\sigma).$

\subsection{To estimate the lower bound}

Now we prove
$$P_{G_{K}(X,\sigma)}(\varphi) \geq h^{*}.$$
 From the choice of $N_{j}$ and the fact that
 $|l_{j}-n_{j}^{'}|\leq g(n_{j}^{'}),$ one can readily verify that  $\lim\limits_{j\to\infty}\frac{t_{j}}{t_{j+1}}=1. $
 For any $j\in \mathbb N$, we have
 \begin{align}\label{eq8}
\sharp \mathcal{D}_{j}^{'}=\sharp\mathcal{L}_{n_{j}^{'}}^{\alpha_{j}^{'},\epsilon_{j}^{'}}\geq e^{n_{j}^{'}(h_{\alpha^{'}_{j}}(\sigma)-\eta)}.
\end{align}
Let $\mathbf w=(w^{1},w^{2},\cdots)\in \Pi_{i=1}^{\infty}\mathcal{D}_{j}^{'}. $ Then for any $w^{j}$, by (\ref{eq3}), we have
\begin{align}\label{eq9}
\begin{split}
&\left|\int\varphi d\mathcal{E}_{l_{j}}(\sigma^{t_{j-1}}\Phi(\mathbf w))-\int\varphi d\alpha_{j}^{'}\right|\\&=\left|\frac{S_{l_{j}}\varphi(\sigma^{t_{j-1}}\Phi(\mathbf w))}{l_{j} }-\frac{S_{n_{j}^{'}}\varphi(  x)}{n^{'}_{j}}\right|+
\left|\frac{S_{n_{j}^{'}}\varphi(   x )}{n^{'}_{j}}-\int\varphi d\alpha_{j}^{'}\right|\\
&\leq \eta,
\end{split}
\end{align}
 where $j\geq 1, x\in [w^{j}]$.

Clearly, $H$ is a compact set. We can just consider finite words $\mathcal{C}$ of $H$ with the property that if $w\in \mathcal{C}$, then $[w]\cap H\neq \emptyset.$
For each $\mathcal{C} $ satisfying any $w\in \mathcal{C}$, $|w|>N$, we define the cover $\mathcal{C}^{'}$, in which each cylinder $w\in \mathcal{C}$ is replaced by its prefix $w|_{t_{j}}$ where $t_{j}\leq |w|<t_{j+1}.$
Then for any $\hat{h}<h^{*}-4\eta$,
\begin{align*}
\begin{split}
M(Z,\hat{h},\varphi, N)&=\inf\Big\{\sum_{[w_{0}w_{1}\cdots w_{m}]\in \mathcal{C}}\exp\Big(-\hat{h}(m+1)+\sup_{x\in[w_{0}w_{1}\cdots w_{m}] }\sum_{k=0}^{m}\varphi(\sigma^{k}x)\Big)\Big\}\\
&\geq \inf\Big\{\sum_{[w_{0}w_{1}\cdots w_{m}]\in \mathcal{C}^{'}}\exp\Big(-\hat{h} t_{j+1} +  \sup_{x\in[w_{0}w_{1}\cdots w_{m}] }\sum_{k=0}^{m}\varphi(\sigma^{k}x)\Big)\Big\}.
\end{split}
\end{align*}
Consider a specific $\mathcal{C}^{'}$ and let $s$ be the largest value of $j$ such that there exists $w|_{t_{j}}\in \mathcal{C}^{'}$.
 In the following, we set
 $$\mathcal{W}_{k}:=\Pi_{i=1}^{k}\Phi(\mathcal{D}_{j}^{'}),~~\overline{\mathcal{W}}_{s}:=\bigcup_{k=1}^{s} \mathcal{W}_{k}.$$
 For any $w^{j}\in \mathcal{D}_{j}^{'}$ and $l_{j}=l_{j}(w^{j})$, we can use (\ref{eq4}) and (\ref{eq8}) to get
 \begin{align}\label{eqc1}
 \sharp\mathcal{D}_{j}^{'}\geq e^{n_{j}^{'}(h_{\alpha_{j}^{'}}(\sigma)-\eta)}\geq  e^{l_{j}(h_{\alpha_{j}^{'}}(\sigma)-\eta)-g(n_{j}^{'})(h_{\alpha_{j}^{'}}(\sigma)-\eta)}
 \geq e^{l_{j}(h_{\alpha_{j}^{'}}(\sigma)-\eta)-l_{j}\eta}.
 \end{align}
 Furthermore, (\ref{eq5}),(\ref{eqc1}) and (\ref{eq9}) yields
 \begin{align}\label{eq10}
\begin{split}
&\sharp\Phi(\mathcal{D}_{j}^{'})\geq e^{l_{j}(h_{\alpha_{j}^{'}}(\sigma)-\eta)-2\hat{l}_{j}\eta} \\&\geq \exp\Big({l_{j}(h_{\alpha^{'}_{j}}(\sigma)+\int\varphi d\alpha_{j}^{'}-\eta)}-S_{l_{j}}\varphi(\sigma^{t_{j-1}}\Phi(\mathbf w)) -3\hat{l}_{j} \eta \Big)\\
&\geq \exp\Big({l_{j}h^{*}}-S_{l_{j}}\varphi(\sigma^{t_{j-1}}\Phi(\mathbf w))-3\hat{l}_{j}\eta\Big),
\end{split}
\end{align}
for any $\mathbf w=(w^{1},w^{2},\cdots)\in \Pi_{i=1}^{\infty}\mathcal{D}_{j}^{'}  $
 and  $l_{j}=l_{j}(w^{j})$, $\hat{l}_{j}:=\max_{w^{j}\in \mathcal{D}^{'}_{j}}l_{j}.$
Given $u:=\Phi(\mathbf w)\in \mathcal{W}_{k}$,
by (\ref{eq10}), we have
 \begin{align}\label{eq11}
\begin{split}
|\mathcal{W}_{k}|
 \geq \exp\Big( {t_{k}h^{*}}-3\hat{t}_{k}\eta-S_{t_{k}}\varphi( \Phi(\mathbf w)) \Big),
\end{split}
\end{align}
for any $\mathbf w=(w^{1},w^{2},\cdots)\in \Pi_{i=1}^{\infty}\mathcal{D}_{j}^{'}  $ and $\hat{t}_{k}:=\sum_{i=1}^{k}\hat{l}_{i}.$
 For $1\leq j\leq k$,  we say the word $v_{1}\cdots v_{j}\in \mathcal{W}_{j}$ is a prefix of $w=w_{1}\cdots w_{k}\in \mathcal{W}_{k}$ if $v_{i}=w_{i}, i=1,\cdots,j$. Note that each $w\in \mathcal{W}_{k}$ is the prefix of exactly $\dfrac{|\mathcal{W}_{s}|}{|\mathcal{W}_{k}|}$ words of $ \mathcal{W}_{s} $. If $\mathcal{W}\subset \overline{\mathcal{W}}_{s}$ contains a prefix of each word of $\mathcal{W}_{s},$ then
 $$\sum_{k=1}^{s}\dfrac{|\mathcal{W}\cap\mathcal{W}_{k}||\mathcal{W}_{ s}|}{|\mathcal{W}_{k}|}\geq |\mathcal{W}_{s}|.$$
 If $\mathcal{W}$ contains a prefix of each word of $\mathcal{W}_{s}$, we have
 $$\sum_{k=1}^{s}\dfrac{|\mathcal{W}\cap\mathcal{W}_{k}| }{|\mathcal{W}_{k}|}\geq 1.$$
It follows from (\ref{eq11}) that

 $$\sum_{\mathcal{C}^{'}} \exp\Big(-{t_{j}h^{*}}+S_{t_{j}}\varphi( \Phi(\mathbf w))+3\hat{t}_{j}\eta\Big)\geq 1.$$
By (\ref{eq6}), we have $\frac{t_{j}}{t_{j+1}}\to 1$ and $\frac{\hat{t}_{j}}{t_{j+1}}\to 1$, moreover, we ${\bf claim}$ that for any $\mathbf w=(w^{1},w^{2},\cdots)\in \Pi_{i=1}^{\infty}\mathcal{D}_{j}^{'}, $
\begin{align*}
 \begin{split}
 \Big( {t_{j}h^{*}}-S_{t_{j}}\varphi( \Phi(\mathbf w))-3\hat{t}_{j}\eta\Big)- \Big(\hat{h} t_{j+1} - \sup_{x\in[\Phi(\mathbf w)|_{(m+1)} ] } \sum_{k=0}^{m}\varphi(x)\Big)
 >0.
 \end{split}
 \end{align*}
Then for $N$ large enough,
$$M(Z,\hat{h},\varphi,  N)=\inf\Big\{\sum_{[w_{0}w_{1}\cdots w_{m}]\in \mathcal{C}}\exp\Big(-\hat{h}(m+1)+\sup_{x\in[w_{0}w_{1}\cdots w_{m}] }\sum_{k=0}^{m}\varphi(x)\Big)\Big\}\geq 1. $$
Moreover,
 $$M(Z,\hat{h},\varphi )\geq 1,$$
so we have $P_{H}(\varphi)\geq \hat{h}.$ Hence,
 $P_{G_{K}(X,\sigma)}(\varphi)\geq P_{H}(\varphi)\geq\hat{h}$. Together with $\hat{h}<h^{*}-4\eta $ and $\eta$ is arbitrary small, this completes the proof.

 \begin{cro}\label{cro}
Let $X$ be a shift space with $\mathcal{L}=\mathcal{L}(X)$. Suppose that $\mathcal{G}\subset \mathcal{L}$ has $(W)$-specification and  $\mathcal{L}$ is edit approachable by $\mathcal{G}$, then  for any $\mu\in M_{\sigma}(X)$, we have
 $G_{\mu}(X,\sigma)\neq \emptyset.$
\end{cro}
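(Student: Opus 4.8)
The plan is to deduce Corollary \ref{cro} from Theorem \ref{main} by choosing an appropriate compact connected set $K$. Given $\mu \in M_{\sigma}(X)$, the natural candidate is the singleton $K = \{\mu\}$. A single point is trivially compact and connected, so Theorem \ref{main} applies (taking $\varphi = 0$, say) and yields
$$P_{G_{\{\mu\}}(X,\sigma)}(0) = h_{top}(G_{\{\mu\}}(X,\sigma)) = \inf_{\nu \in \{\mu\}}\{h_{\nu}(\sigma)\} = h_{\mu}(\sigma) \geq 0.$$
The key observation is that, by the definitions in the preliminaries, $G_{\{\mu\}}(X,\sigma) = \{x \in X : A(\mathcal{E}_n(x)) = \{\mu\}\} = \{x : \mathcal{E}_n(x) \to \mu\} = G_{\mu}(X,\sigma)$; the limit point set of $\{\mathcal{E}_n(x)\}$ is the singleton $\{\mu\}$ precisely when the sequence converges to $\mu$ (using that $M_\sigma(X)$ is a compact metric space, so a sequence with a unique limit point converges). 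Therefore $h_{top}(G_{\mu}(X,\sigma)) = h_{\mu}(\sigma) \geq 0$, which forces $G_{\mu}(X,\sigma) \neq \emptyset$, since the topological entropy (or pressure) of the empty set is $-\infty$ (indeed $M(\emptyset, t, \varphi, N) = 0$ for all $t$, so $P_{\emptyset}(\varphi) = -\infty$ by the infimum convention, or one simply notes the empty cover witnesses $M = 0$).

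I would spell this out in three short steps: first, verify $K = \{\mu\}$ is compact, connected, and nonempty; second, identify $G_{K}(X,\sigma)$ with $G_{\mu}(X,\sigma)$ via the limit-point characterization; third, apply Theorem \ref{main} with $\varphi \equiv 0$ to get $h_{top}(G_\mu(X,\sigma)) = h_\mu(\sigma) \geq 0 > -\infty = h_{top}(\emptyset)$, concluding $G_\mu(X,\sigma) \neq \emptyset$.

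I do not expect any serious obstacle here; the only point requiring a line of care is the convention $P_{\emptyset}(\varphi) = -\infty$ (equivalently $h_{top}(\emptyset) = -\infty$), which should be extracted from the definition of $M(Z,t,\varphi,N)$ — for $Z = \emptyset$ the empty collection $\mathcal{S} = \emptyset$ is a valid cover, giving an empty sum equal to $0$, hence $M(\emptyset, t, \varphi) = 0$ for every $t$ and thus $P_{\emptyset}(\varphi) = \inf\{t : M(\emptyset,t,\varphi) = 0\} = -\infty$. Since $h_\mu(\sigma) \geq 0$ strictly exceeds $-\infty$, the set $G_\mu(X,\sigma)$ cannot be empty. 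Alternatively, one can avoid the entropy machinery entirely and observe that the Horseshoe Theorem \ref{horse} already produces, for any $\mu$, ergodic measures $\mu'$ on transitive sofic subshifts $X_n$ approximating $\mu$, and such subshifts carry generic points which can be combined (via the editing-and-gluing map $\Phi$) into a point in $G_\mu(X,\sigma)$ — but invoking Theorem \ref{main} directly is cleaner.
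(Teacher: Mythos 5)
Your argument is correct and is the natural derivation: take $K=\{\mu\}$, note that in the compact metric space $M_\sigma(X)$ a sequence converges to $\mu$ if and only if its set of limit points is exactly $\{\mu\}$ (so $G_{\{\mu\}}(X,\sigma)=G_\mu(X,\sigma)$), then apply Theorem~\ref{main} with $\varphi\equiv 0$ and use that the defining infimum in $M(\emptyset,t,\varphi,N)$ is attained by the empty cover, giving $P_\emptyset(\varphi)=-\infty<0\le h_\mu(\sigma)$. The paper states the corollary without an explicit proof, but this is exactly the intended route from Theorem~\ref{main}, and nothing is missing.
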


  \begin{prop}
Let $X$ be a shift space with $\mathcal{L}=\mathcal{L}(X)$. Suppose that $\mathcal{G}\subset \mathcal{L}$ has $(W)$-specification and  $\mathcal{L}$ is edit approachable by $\mathcal{G}$, then $\mathcal{L}_{\varphi}$ is non-empty bounded interval. Furthermore, $\mathcal{L}_{\varphi}=\{\int \varphi d\mu :\mu\in M_{\sigma}(X)\}.$
\end{prop}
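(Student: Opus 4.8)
The plan is to prove the two assertions separately, using Theorem~\ref{main} (or its entropy version / Corollary~\ref{cro}) as the main engine. First I would show that $\mathcal{L}_\varphi = \{\int\varphi\, d\mu : \mu\in M_\sigma(X)\}$. For the inclusion ``$\supseteq$'', fix $\mu\in M_\sigma(X)$ and let $\alpha=\int\varphi\,d\mu$. By Corollary~\ref{cro}, $G_\mu(X,\sigma)\neq\emptyset$; any $x\in G_\mu(X,\sigma)$ has $\mathcal{E}_n(x)\to\mu$ in the weak$^*$ topology, so $\frac1n S_n\varphi(x)=\int\varphi\,d\mathcal{E}_n(x)\to\int\varphi\,d\mu=\alpha$ by continuity of $\varphi$. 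Hence $x\in X(\varphi,\alpha)$ and $\alpha\in\mathcal{L}_\varphi$. For the inclusion ``$\subseteq$'', take $\alpha\in\mathcal{L}_\varphi$, pick $x\in X(\varphi,\alpha)$, and consider any weak$^*$ limit point $\nu$ of the sequence $\{\mathcal{E}_n(x)\}$; such a limit point exists by compactness of $M(X)$ and is $\sigma$-invariant by the standard argument. Along the subsequence realizing the limit, $\int\varphi\,d\mathcal{E}_{n_k}(x)\to\int\varphi\,d\nu$, while the left side also converges to $\alpha$ since $x\in X(\varphi,\alpha)$; therefore $\alpha=\int\varphi\,d\nu$ with $\nu\in M_\sigma(X)$.

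Next I would deduce that $\mathcal{L}_\varphi$ is a non-empty bounded interval. Non-emptiness follows since $M_\sigma(X)\neq\emptyset$ (it is a non-empty compact set for any topological dynamical system on a compact metric space), so the image $\{\int\varphi\,d\mu\}$ is non-empty. Boundedness is immediate: $|\int\varphi\,d\mu|\le\|\varphi\|_\infty$ for every $\mu$, so $\mathcal{L}_\varphi\subset[-\|\varphi\|_\infty,\|\varphi\|_\infty]$. That $\mathcal{L}_\varphi$ is an interval is the key structural point: the map $\mu\mapsto\int\varphi\,d\mu$ is affine and weak$^*$-continuous on $M_\sigma(X)$, and $M_\sigma(X)$ is convex and (being a compact convex subset of the locally convex space of signed measures, hence path-connected) connected; the continuous image of a connected set in $\mathbb R$ is an interval. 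Combined with the equality $\mathcal{L}_\varphi=\{\int\varphi\,d\mu:\mu\in M_\sigma(X)\}$ from the previous paragraph, this gives the claim.

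The only genuine subtlety — and the step I would single out as the main obstacle — is verifying that $X(\varphi,\alpha)\neq\emptyset$ for \emph{every} $\alpha$ strictly between $\min_\mu\int\varphi\,d\mu$ and $\max_\mu\int\varphi\,d\mu$, i.e. that the interval has no gaps; but this is exactly what the affineness/connectedness argument above delivers once we know $M_\sigma(X)$ is connected and Corollary~\ref{cro} guarantees each invariant measure is realized by a generic point. In fact one could sharpen the statement by invoking Theorem~\ref{main} with $K=\{\mu\}$ to conclude $X(\varphi,\alpha)$ has full ``pressure dimension'' in the sense $P_{X(\varphi,\alpha)}(0)=\sup\{h_\mu(\sigma):\int\varphi\,d\mu=\alpha\}$, but for the present proposition the bare non-emptiness from Corollary~\ref{cro} suffices. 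I would therefore organize the write-up as: (i) $\mathcal{L}_\varphi=\{\int\varphi\,d\mu:\mu\in M_\sigma(X)\}$ via Corollary~\ref{cro} and weak$^*$ limit-point analysis; (ii) the right-hand set is a bounded interval by affineness, continuity, convexity/connectedness of $M_\sigma(X)$, and the bound $\|\varphi\|_\infty$; (iii) conclude.
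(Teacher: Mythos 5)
Your proposal is correct and follows essentially the same route as the paper: both directions of $\mathcal{L}_\varphi=\{\int\varphi\,d\mu:\mu\in M_\sigma(X)\}$ are established via Corollary~\ref{cro} (generic points) and weak$^*$ limit points of empirical measures, boundedness comes from compactness of $M_\sigma(X)$, and the interval property from convexity. The only cosmetic difference is that you phrase the interval step via affineness and connectedness of $M_\sigma(X)$, whereas the paper writes out the explicit convex combination $t\mu_1+(1-t)\mu_2$; these are the same argument in substance.
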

\begin{proof}
Let $\mathcal{I}_{\varphi}:=\{\int \varphi d\mu: \mu\in M_{\sigma}(X)\}$. Since $M_{\sigma}(X)$ is compact, then $\mathcal{I}_{\varphi}$ is bounded. We first show $\mathcal{I}_{\varphi}=\mathcal{L}_{\varphi}.$ For any $\alpha\in \mathcal{I}_{\varphi} $, we can choose $\mu\in M_{\sigma}(X)$ such that $\alpha=\int \varphi d\mu$. By Corollary \ref{cro}, there exists $x\in G_{\mu}$, so $\alpha\in \mathcal{L}_{\varphi}$, this shows $\mathcal{I}_{\varphi}\subset \mathcal{L}_{\varphi}.$ On the other hand,
for any $\alpha\in \mathcal{L}_{\varphi}$, we can choose $x\in X(\varphi ,\alpha)$. Let $\mu$ be any weak$^*$ limit point of the sequence $\mathcal{E}_{n}(x)$. It is a standard result that $\mu\in M_{\sigma}(X)$, and it is easy to show that $\int \varphi d\mu =\alpha. $ So we have $\mathcal{I}_{\varphi}=\mathcal{L}_{\varphi}.$
Secondly, we show $\mathcal{I}_{\varphi}$ is an interval. By the convexity of $M_{\sigma}(X)$. Assume $\mathcal{I}_{\varphi}$ is not a single point. Let $\alpha_{1}, \alpha_{2}\in \mathcal{I}_{\varphi}.$ Let $\beta \in (\alpha_{1},\alpha_{2})$, and $\mu_{i}$ satisfying $\int \varphi d\mu_{i}=\alpha_{i}$ for $i=1,2.$ Let $t\in (0,1)$ satisfy $\beta =t\alpha_{1}+(1-t)\alpha_{2}$. we can get $m:=t\mu_{1}+(1-t)\mu_{2}$ satisfying $\int \varphi d\mu=\beta.$
\end{proof}
We give the following conditional variational principle.
 \begin{prop}\label{prop3.3}
 Let $X$ be a shift space with $\mathcal{L}=\mathcal{L}(X)$ and $\varphi :X\to \mathbb R $ be a continuous function. Suppose that $\mathcal{G}\subset \mathcal{L}$ has $(W)$-specification and  $\mathcal{L}$ is edit approachable by $\mathcal{G}$, then for any $\psi\in C(X)$,$\alpha\in \mathbb R$
$$P_{X(\psi, \alpha)}(\varphi)= \sup\{h_{\mu}(\sigma)+\int \varphi d\mu, \int \psi d\mu =\alpha\}.$$
\end{prop}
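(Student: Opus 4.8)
The plan is to deduce Proposition \ref{prop3.3} from Theorem \ref{main} by choosing the compact connected set $K$ appropriately. Given $\psi\in C(X)$ and $\alpha\in\mathbb R$, set $K:=\{\mu\in M_\sigma(X):\int\psi\,d\mu=\alpha\}$. This set is convex (hence connected) and closed in the compact space $M_\sigma(X)$, hence compact; it is non-empty exactly when $\alpha$ lies in the interval $\mathcal I_\psi=\{\int\psi\,d\mu:\mu\in M_\sigma(X)\}$ described in the previous proposition. First I would note that if $X(\psi,\alpha)\neq\emptyset$ then there is a point $x$ with $\frac1nS_n\psi(x)\to\alpha$, and any weak$^*$ limit $\mu$ of $\mathcal E_n(x)$ satisfies $\int\psi\,d\mu=\alpha$, so $K\neq\emptyset$; conversely by the previous proposition $K\neq\emptyset$ gives $\alpha\in\mathcal L_\psi$. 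So I may assume $K\neq\emptyset$ and that $\alpha$ is an interior-or-boundary point of $\mathcal I_\psi$.

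The core step is to relate $X(\psi,\alpha)$ to the saturated sets $G_{K'}(X,\sigma)$ for connected compact $K'\subset K$. On one hand, $G_K(X,\sigma)\subset X(\psi,\alpha)$: if $A(\mathcal E_n(x))=K$ then every limit point $\mu$ of $\mathcal E_n(x)$ lies in $K$, so $\int\psi\,d\mu=\alpha$ for all such $\mu$, which by continuity of $\psi$ forces $\frac1nS_n\psi(x)\to\alpha$. Hence $P_{X(\psi,\alpha)}(\varphi)\geq P_{G_K(X,\sigma)}(\varphi)=\inf_{\mu\in K}\{h_\mu(\sigma)+\int\varphi\,d\mu\}$ by Theorem \ref{main} — but this gives an \emph{inf}, not the \emph{sup} we want, so instead I would run this with the one-point set $K'=\{\mu\}$ for each individual $\mu\in K$: $G_{\{\mu\}}(X,\sigma)\subset X(\psi,\alpha)$ and $P_{G_{\{\mu\}}(X,\sigma)}(\varphi)=h_\mu(\sigma)+\int\varphi\,d\mu$, yielding $P_{X(\psi,\alpha)}(\varphi)\geq \sup_{\mu\in K}\{h_\mu(\sigma)+\int\varphi\,d\mu\}$, which is the desired lower bound. (Here I use that a singleton is trivially compact and connected, and $G_{\{\mu\}}(X,\sigma)=G_\mu(X,\sigma)$.)

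For the upper bound, the point is that $X(\psi,\alpha)$ is contained in the union over all "admissible'' invariant measures, and one invokes a general upper bound for topological pressure on level sets — precisely the estimate from \cite{PeiChe} (Theorem 3.1) that the excerpt already cites for the upper bound in Theorem \ref{main}. Concretely, any $x\in X(\psi,\alpha)$ has all weak$^*$ limit measures of $\mathcal E_n(x)$ in $K$, and the Billingsley-type / entropy-distribution argument of \cite{PeiChe} bounds $P_{X(\psi,\alpha)}(\varphi)$ above by $\sup\{h_\mu(\sigma)+\int\varphi\,d\mu:\mu\in M_\sigma(X),\ \int\psi\,d\mu=\alpha\}$. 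I would state this, cite \cite{PeiChe}, and combine with the lower bound to conclude equality.

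The main obstacle is the upper bound: one must make sure the cited general inequality of \cite{PeiChe} applies in this symbolic, non-compact-set setting with the pressure defined as in the excerpt, and in particular that the "level set'' upper bound there is stated for $X(\psi,\alpha)$ with a potential $\varphi$ (not just for entropy). If that reference only covers the entropy case or a slightly different setup, I would instead give the direct covering argument: cover $X(\psi,\alpha)$ by cylinders on which $\frac1mS_m\psi$ is within $\epsilon$ of $\alpha$ and $\frac1mS_m\varphi$ is nearly constant, pass to weak$^*$ limits of the empirical measures to land in a neighborhood of $K$, and use upper semicontinuity of $\mu\mapsto h_\mu(\sigma)$ plus the definition of $M(Z,t,\varphi,N)$ to bound the pressure. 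The lower bound, by contrast, is essentially immediate from Theorem \ref{main} applied to singletons, so no real difficulty is expected there.
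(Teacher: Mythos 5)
Your proposal is correct and follows essentially the same route as the paper: the paper also takes $F(\alpha)=\{\mu:\int\psi\,d\mu=\alpha\}$, observes $G_\mu\subset X(\psi,\alpha)$ for each $\mu\in F(\alpha)$, uses $P_{G_\mu}(\varphi)=h_\mu(\sigma)+\int\varphi\,d\mu$ (Theorem \ref{main} with $K=\{\mu\}$) to get the $\sup$ lower bound, and cites Theorem 3.1 of \cite{PeiChe} for the upper bound. (Incidentally, the paper's definition of $G^{F(\alpha)}$ has a small typo — it should be $A(\mathcal{E}_n(x))\subset F(\alpha)$ rather than $=F(\alpha)$ — which your more careful singleton formulation avoids.)
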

\begin{proof}
Let $F(\alpha):=\{\mu\in M_{\sigma}(X): \int\psi d\mu=\alpha\}$. $F(\alpha)$ is a closed set. The statement $\lim\limits_{n\to\infty}\frac{S_{n}\psi(x)}{n}=\alpha$ is equivalent to the statement $\mathcal{E}_{n}(x)$ has all its limit-points in $F(\alpha)$. Let
$$G^{F(\alpha)}:=\{x\in X: A(\mathcal{E}_{n}(x))=F(\alpha)\}.$$
For any $\mu\in F(\alpha),$ we have $G_{\mu}\subset G^{F(\alpha)}.$
So $h_{\mu}(\sigma)+\int\varphi d\mu =P_{G_{\mu}}(\varphi)\leq P_{G^{F(\alpha)}}(\varphi). $ And thus $\sup\{h_{\mu}(\sigma)+\int\varphi d\mu:\mu\in F(\alpha)\}\leq P_{G^{F(\alpha)}}(\varphi)$.
On the other hand, the upper bound can be verified by Theorem 3.1 in \cite{PeiChe}.

\end{proof}

Finally, we can show that the result about irregular set.

\noindent {\rm \bf Proof of Theorem 1.3}

Since the entropy map is upper semi-continuous, there exists ergodic measure $\mu\in M^{e}_{\sigma}(X)$ such that $P_{X}(\varphi)= h_{\mu}(\sigma)+\int\varphi d\mu  .$ By Lemma 2.1 in \cite{Thoo},   $\hat{X}(\psi )\neq \emptyset$ implies that there exists another  ergodic measure $\nu\in M^{e}_{\sigma}(X)$ and $\nu\neq \mu $. Let $p_{n}\in (0, 1)$ and $p_{n}\to 0$, we define $\nu_{n}:=p_{n}\nu+(1-p_{n})\mu$. Clearly, $\nu_{n}\to \mu$ and
 $$h_{\nu_{n}}(\sigma)+\int \varphi d\nu_{n}\to P_{X}(\varphi).$$
For any $\eta>0$, choose $N\geq 1, $ such that  for any $n\geq N$,
 $$h_{\nu_{n}}(\sigma)+\int \varphi d\nu_{n}\geq  P_{X}(\varphi)-\eta.$$
Furthermore, define the compact connected subset $K_{n}:=\{t\nu_{n}+(1-t)\mu:t\in [0,1]\}\subset M_{\sigma}(X).$
For any $n\geq 1$, we have
$$\hat{X}(\psi )  \supset \{x\in X: A(\mathcal{E}_{j}(x))=K_{n}\}.$$
It turns out that
\begin{align*}
\begin{split}
P_{X(\psi )}(\varphi)&\geq \inf_{m\in K_{n}}\{h_{m}(\sigma)+\int\varphi dm\}\\
&=\inf_{t\in [0,1]}\{th_{\nu_{n}}+(1-t)h_{\mu}(\sigma)+t\int\varphi d\nu_{n}+(1-t)\int \varphi d\mu\}
\\
&\geq \inf_{t\in [0,1]}\{t(P_{X}(\varphi)-\eta)+(1-t)P_{X}(\varphi)\}=P_{X}(\varphi)-\eta.
\end{split}
\end{align*}
Since $\eta$ can be arbitrary small, we complete the proof.

 \section{Applications}
 \subsection{Hausdorff dimension}
 In this section, we use the Hausdorff dimension to describe the level set. We use a metric defined by Gatzouras and Peres in \cite{DimPere}.
 Given a strictly positive continuous function $\varphi: X\to \mathbb R$, associate with it a metric $d_{\varphi}$ defined by for any $x, y\in X$, define

 \begin{equation*}
d_{\varphi}(x,y)=
\begin{cases}e^{-\sup_{z\in [x\wedge y]}S_{|x\wedge y|}\varphi(z)} ~~~~~if~ x\neq y \\[5pt] 0 ~~~~~~~~~~~~~~~~~~~~~~~~~~~~if~ x=y.\\
\end{cases}
\end{equation*}
Observe that $d_{\varphi}$ induces the product topology
on $X$, since the strict positivity of $\varphi$ implies that $S_{n}\varphi(x)\to \infty$ for all $x\in X$. Furthermore, we can use the metric $d_{\varphi}$ to define Hausdorff dimension denoted by ${\rm dim}_{\varphi}(\cdot)$. Together with the definition of topological pressure, we
 readily get that for any set $Z\subset X,$ the Hausdorff dimension of $Z$ is a unique solution of Bowen equation $P_{Z}(-s\varphi)=0,i.e., s={\rm dim}_{\varphi}(Z).$ Moreover, by Theorem \ref{main2} and Proposition \ref{prop3.3}, we have the following conditional variational principles.
 \begin{prop}\label{cor3.2}
Let $X$ be a shift space with $\mathcal{L}=\mathcal{L}(X)$. Suppose that $\mathcal{G}\subset \mathcal{L}$ has $(W)$-specification and  $\mathcal{L}$ is edit approachable by $\mathcal{G}$, then for any $\psi\in C(X)$,
$${\rm dim}_{\varphi}(X(\psi, \alpha), \varphi)= \sup\{ \frac{h_{\mu}(\sigma)}{\int \varphi d\mu} , \int \psi d\mu =\alpha\}.$$
\end{prop}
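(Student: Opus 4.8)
The plan is to derive Proposition~\ref{cor3.2} as a direct consequence of the two variational principles already established, namely Proposition~\ref{prop3.3} together with the Bowen-equation characterization of $\dim_\varphi$. Recall that for any $Z\subset X$ the number $s=\dim_\varphi(Z)$ is the unique root of $P_Z(-s\varphi)=0$, which holds because $t\mapsto P_Z(-t\varphi)$ is strictly decreasing (here $\varphi$ is strictly positive) and passes from a nonnegative value to $-\infty$. Applying this with $Z=X(\psi,\alpha)$, the value $s=\dim_\varphi(X(\psi,\alpha))$ is characterized by
\begin{equation*}
P_{X(\psi,\alpha)}(-s\varphi)=0.
\end{equation*}
By Proposition~\ref{prop3.3} (applied to the continuous function $-s\varphi$ in place of $\varphi$) the left-hand side equals $\sup\{h_\mu(\sigma)-s\int\varphi\,d\mu:\ \mu\in M_\sigma(X),\ \int\psi\,d\mu=\alpha\}$, so the defining equation becomes
\begin{equation*}
\sup\Big\{h_\mu(\sigma)-s\!\int\!\varphi\,d\mu:\ \mu\in M_\sigma(X),\ \int\!\psi\,d\mu=\alpha\Big\}=0.
\end{equation*}

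Next I would translate this supremum-equals-zero condition into the stated formula. Since $\varphi$ is strictly positive and $X$ is compact, $\int\varphi\,d\mu\geq c>0$ for all $\mu$, so each quantity $\frac{h_\mu(\sigma)}{\int\varphi\,d\mu}$ is well defined and bounded. For a fixed admissible $\mu$ (i.e.\ with $\int\psi\,d\mu=\alpha$) the inequality $h_\mu(\sigma)-s\int\varphi\,d\mu\le 0$ is equivalent to $s\ge \frac{h_\mu(\sigma)}{\int\varphi\,d\mu}$, and strict inequality corresponds to strict inequality of the ratios. Hence the condition ``$\sup_\mu\big(h_\mu(\sigma)-s\int\varphi\,d\mu\big)=0$'' says precisely that $s$ is the least real number that dominates every ratio $\frac{h_\mu(\sigma)}{\int\varphi\,d\mu}$ over admissible $\mu$, i.e.\ $s=\sup\{\frac{h_\mu(\sigma)}{\int\varphi\,d\mu}:\ \int\psi\,d\mu=\alpha\}$. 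This is exactly the claimed identity. One should also note the degenerate case $X(\psi,\alpha)=\emptyset$: then the set of admissible $\mu$ is empty and both sides are vacuous/$-\infty$, so the statement is to be read over those $\alpha$ in the spectrum, consistently with the hypothesis that guarantees (via Corollary~\ref{cro}) that every $\alpha\in\{\int\psi\,d\mu\}$ is realized.

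For the monotonicity and continuity facts I would invoke the standard properties of topological pressure on Borel sets: $t\mapsto P_Z(-t\varphi)$ is non-increasing in general, and strictly decreasing with slope bounded away from $0$ because $-t\varphi-(-t'\varphi)=(t'-t)\varphi\ge c(t'-t)$ pointwise, which forces $P_Z(-t'\varphi)\le P_Z(-t\varphi)-c(t'-t)$; together with $P_Z(-t\varphi)\to-\infty$ this yields existence and uniqueness of the root. These are the same ingredients used to justify the Bowen-equation remark in the Hausdorff-dimension subsection, so they may be quoted rather than reproved.

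The only genuine subtlety—and the step I expect to be the main obstacle—is handling the interchange between ``the root of a pressure function'' and ``a supremum of ratios'' cleanly when the supremum is not attained, and making sure the variational principle of Proposition~\ref{prop3.3} is genuinely applicable with the potential $-s\varphi$ (it is, since $-s\varphi\in C(X)$ for every real $s$, and the hypotheses on $\mathcal{G}$ and edit-approachability are statements about $X$, not about the potential). Once that is observed, the argument is a short chain of equivalences; no new dynamics is needed beyond Proposition~\ref{prop3.3} and the Bowen equation, so the proof is essentially a two-line deduction with the bookkeeping above.
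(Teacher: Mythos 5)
Your proposal is correct and takes the same route the paper sketches: the paper derives this proposition by combining the Bowen-equation characterization $P_Z(-s\varphi)=0$ with Proposition~\ref{prop3.3} applied to the potential $-s\varphi$. You have simply spelled out the monotonicity and sup/root bookkeeping that the paper leaves implicit.
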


 \begin{prop}
 Let $X$ be a shift space with $\mathcal{L}=\mathcal{L}(X)$ and $\varphi :X\to \mathbb R $ be a continuous function. Suppose that $\mathcal{G}\subset \mathcal{L}$ has $(W)$-specification and  $\mathcal{L}$ is edit approachable by $\mathcal{G}$, then for any $\psi\in C(X)$,  either $\hat{X}(\psi )=\emptyset$, or
$${\rm dim}_{\varphi}(\hat{X}(\psi ))=  {\rm dim}_{\varphi}( X ).$$
 \end{prop}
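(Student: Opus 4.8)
The plan is to derive the statement directly from the Bowen equation characterization of the Hausdorff dimension $\dim_\varphi$ together with Theorem~\ref{main2}. Recall that for any $Z\subset X$ the quantity $\dim_\varphi(Z)$ is the unique $s$ solving $P_Z(-s\varphi)=0$, since $s\mapsto P_Z(-s\varphi)$ is continuous and strictly decreasing (here $\varphi>0$). So it suffices to show that the unique root $s_0$ of $P_X(-s\varphi)=0$ is also a root of $P_{\hat X(\psi)}(-s\varphi)=0$, whenever $\hat X(\psi)\neq\emptyset$.

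First I would fix the strictly positive continuous function $\varphi$ and set $\hat\varphi:=-s_0\varphi\in C(X)$, where $s_0:=\dim_\varphi(X)$, so that $P_X(\hat\varphi)=0$. Next, observe that the irregular set for $\psi$ does not depend on the choice of the potential we use to compute pressure: $\hat X(\psi)$ is a fixed subset of $X$ determined only by $\psi$. Now apply Theorem~\ref{main2} with the continuous potential $\hat\varphi$ in place of $\varphi$: the hypotheses of Theorem~\ref{main2} ($\mathcal{G}$ has $(W)$-specification, $\mathcal{L}$ edit approachable by $\mathcal{G}$) are exactly the standing assumptions here, and $\psi\in C(X)$ is arbitrary. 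Theorem~\ref{main2} then gives the dichotomy: either $\hat X(\psi)=\emptyset$, or $P_{\hat X(\psi)}(\hat\varphi)=P_X(\hat\varphi)=0$. In the second case, the equation $P_{\hat X(\psi)}(-s\varphi)=0$ has $s=s_0$ as a solution, and by uniqueness of the root of the Bowen equation for the set $\hat X(\psi)$ we conclude $\dim_\varphi(\hat X(\psi))=s_0=\dim_\varphi(X)$.

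The only points requiring a word of care are: (i) the Bowen equation has a genuinely \emph{unique} root for the set $\hat X(\psi)$ — this needs $\hat X(\psi)\neq\emptyset$ so that $P_{\hat X(\psi)}(-s\varphi)$ is finite and the map $s\mapsto P_{\hat X(\psi)}(-s\varphi)$ is strictly decreasing, which follows from $\inf_X\varphi>0$ exactly as in the discussion of $\dim_\varphi$ in the excerpt; and (ii) one should note $P_X(\hat\varphi)=0$ is finite, so that $s_0$ is well defined, which is standard since $X$ is compact. I do not expect a real obstacle here; the content of the statement is entirely carried by Theorem~\ref{main2}, and the remainder is the routine translation between topological pressure and Hausdorff dimension via the Bowen equation. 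The proof is therefore a one-line deduction: apply Theorem~\ref{main2} to the potential $-\dim_\varphi(X)\,\varphi$ and invoke uniqueness in the Bowen equation.

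\begin{proof}
If $\hat X(\psi)=\emptyset$ there is nothing to prove, so assume $\hat X(\psi)\neq\emptyset$. Since $\varphi$ is strictly positive and continuous on the compact space $X$, for every non-empty $Z\subset X$ the function $s\mapsto P_Z(-s\varphi)$ is finite, continuous and strictly decreasing, hence has a unique zero, which equals $\dim_\varphi(Z)$ by the Bowen equation. Let $s_0:=\dim_\varphi(X)$, so that $P_X(-s_0\varphi)=0$. Applying Theorem~\ref{main2} to the continuous potential $-s_0\varphi$ and to $\psi\in C(X)$, the alternative $\hat X(\psi)\neq\emptyset$ forces
$$P_{\hat X(\psi)}(-s_0\varphi)=P_X(-s_0\varphi)=0.$$
Thus $s_0$ is the unique zero of $s\mapsto P_{\hat X(\psi)}(-s\varphi)$, i.e. $\dim_\varphi(\hat X(\psi))=s_0=\dim_\varphi(X)$.
\end{proof}
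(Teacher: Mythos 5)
Your proof is correct and follows exactly the route the paper intends: the paper states this proposition immediately after noting that $\dim_\varphi(Z)$ is the unique root of the Bowen equation $P_Z(-s\varphi)=0$ and says the result follows ``by Theorem~\ref{main2},'' which is precisely your deduction of applying Theorem~\ref{main2} to the potential $-\dim_\varphi(X)\,\varphi$ and invoking uniqueness of the Bowen root. Your explicit note that strict positivity of $\varphi$ (implicit in the paper's setup of $d_\varphi$) is what gives monotonicity and hence uniqueness is a helpful clarification but not a different argument.
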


\subsection{Subshifts}
 In this section, we study the level set of two main examples ($S$-gap shifts and $\beta$-shifts).

{\bf $S$-gap shifts} For a subshift of $\{0,1\}^{\mathbb N}$, fixed $S\subset \{0,1,2\cdots\}$, the number of $0$ between consecutive $s$ is an integer in $S$. That is, the language
  $$\{0^{n}10^{n_1}0^{n_2}0^{n_3}\cdots0^{n_k}1: 1\leq i\leq k ~\text{and}, n,m\in \mathbb N\},$$
  together with $\{0^{n}:n\in \mathbb N\}$, and this subshift is denoted by $\Sigma_{S}.$

   {\bf $\beta$-shifts}
  Fix $\beta>1$, write $b=\lceil \beta\rceil, $ and let $w^{\beta}\in \{0,1,\cdots,b-1\}^{\mathbb N}$ be the greedy $\beta$-expansion of $1$. Then $w^{\beta}$ satisfies $\sum_{j=1}^{\infty}w_{j}^{\beta} \beta^{-j}=1,$ and
  has the property that $\sigma^{j}(w^{\beta})\prec w^{\beta}$ for all $j\geq 1$, where $\prec$ denotes the lexicographic ordering. The $\beta$-shift is defined by
  $$\Sigma_{\beta}=\{x\in \{0,1,\cdots,b-1\}^{\mathbb N}:\sigma^{j}(x)\prec w^{\beta} ~\text{for all } j\geq 1\}.$$
In fact, in \cite{CliTho}, Climenhaga, Thompson and Yamamoto showed that all the factors of $S$-gap shifts and $\beta$-shifts satisfy the the conditions of Theorem\ref{main}(i.e., These subshifts have non-uniform structure). Hence,  for $X=\Sigma_{S}~\text{or}~\Sigma_{\beta}$, and $\varphi, \psi\in C(X), \alpha\in \mathbb R,$ we have
$${\rm dim}_{\varphi}(X(\psi, \alpha), \varphi)= \sup\{ \frac{h_{\mu}(\sigma)}{\int \varphi d\mu} , \int \psi d\mu =\alpha\},$$
and either $\hat{X}(\psi )=\emptyset$, or
$${\rm dim}_{\varphi}(\hat{X}(\psi ))=  {\rm dim}_{\varphi}( X ).$$
Accordingly, we also can use the topological pressure to describe these level set just like Theorem \ref{main} and Theorem \ref{main2}.

\end{document}